\newtheorem{thm}{Theorem}[section]
\newtheorem{lem}[thm]{Lemma}
\newtheorem{pro}[thm]{Proposition}
\newtheorem{cor}[thm]{Corollary}
\theoremstyle{definition}
\newtheorem{exa}[thm]{Example}
\theoremstyle{remark}
\newcommand{\R}{\mathbb{R}}
\newcommand{\Z}{\mathbb{Z}}
\newcommand{\N}{\mathbb{N}}
\newcommand{\cI}{\mathcal{I}}
\newcommand{\cP}{\mathcal{P}}
\newcommand{\de}{\delta}
\newcommand{\De}{\Delta}
\newcommand{\ep}{\varepsilon}
\newcommand{\om}{\omega}
\newcommand{\si}{\sigma}
\renewcommand{\phi}{\varphi}
\newcommand{\CAT}{\operatorname{CAT}}
\newcommand{\hyp}{\operatorname{H}}
\newcommand{\pr}{\operatorname{pr}}
\newcommand{\prr}{\operatorname{prr}}
\newcommand{\crr}{\operatorname{cr}}
\newcommand{\intr}{\operatorname{int}}
\newcommand{\reg}{\operatorname{reg}}
\newcommand{\sign}{\operatorname{sign}}
\newcommand{\ay}{\operatorname{aY}}
\newcommand{\harm}{\operatorname{Harm}}
\newcommand{\hm}{\operatorname{Hm}}
\newcommand{\lh}{\operatorname{lh}}
\newcommand{\rh}{\operatorname{rh}}
\newcommand{\h}{\operatorname{h}}
\newcommand{\co}{\operatorname{co}}
\newcommand{\es}{\emptyset}
\newcommand{\set}[2]{\{#1:\,\text{#2}\}}
\newcommand{\sm}{\setminus}
\newcommand{\sub}{\subset}
\newcommand{\wt}{\widetilde}
\newcommand{\wh}{\widehat}
\begin{document}

\title{On the inverse problem of M\"obius geometry on the circle}
\author{Sergei Buyalo\footnote{Supported by RFFI Grant
17-01-00128a}}

\date{}
\maketitle

 \begin{abstract} Any (boundary continuous) hyperbolic space induces on the 
 boundary at infinity a M\"obius structure which reflects most
 essential asymptotic properties of the space. In this paper, we initiate
 the study of the inverse problem: describe M\"obius structures which are
 induced by hyperbolic spaces at least in the simplest case of the circle.
 For a large class of M\"obius structures on the circle, we define a canonical
 ``filling'' each of them, which serves as a natural candidate for a solution of the 
 inverse problem. This is a 3-dimensional (pseudo)metric space
 $\harm$, 
 which consists of harmonic 4-tuples of the respective M\"obius structure with a distance 
 determined by zig-zag paths. Our main result is the proof that every line in
 $\harm$
 is a geodesic, i.e., shortest in the zig-zag distance on each segment. This gives a good 
 starting point to show that 
 $\harm$
 is Gromov hyperbolic with the prescribed M\"obius structure at infinity.
 \end{abstract}

\noindent{\small{\bf Keywords:} M\"obius structures, cross-ratio, harmonic 4-tuples}

\medskip

\noindent{\small{\bf Mathematics Subject Classification:} 51B10}

\section{Introduction} A M\"obius structure on a set 
$X$
is a class of (semi)metrics whose cross-ratios take one and the same value on every given
4-tuple of points in
$X$.
M\"obius structures naturally arise as geometric structures on the boundary
at infinity of hyperbolic spaces. The classical example is the extended
Euclidean space
$\wh\R^n=\R^n\cup\{\infty\}$,
which gives rise to the canonical M\"obius structure 
$M_0$
over the sphere
$S^n=\wh\R^n$,
whose group of M\"obius transformations is isomorphic to the isometry
group of the hyperbolic space
$\hyp^{n+1}$.

The inverse problem of M\"obius geometry asks to describe M\"obius structures which are
induced by hyperbolic spaces. The papers \cite{BS14}, \cite{BS15} can be regarded as
solutions of this problem in the case of rank 1 symmetric spaces. In a general case, it seems
very little is known, cp.~\cite{BeS17}, \cite{BFI18}. Thus we consider a simplest nontrivial case when
$X=S^1$
is the circle.

The class of all M\"obius structures on the circle is very large: any extended (semi)metric
on
$\wh\R$
generates some M\"obius structure on
$S^1$.
Note that various hyperbolic cone constructions (see \cite{BoS}, \cite{BS07}) give a hyperbolic metric space with
prescribed metric at infinity. However, no one of them is equivariant with respect to M\"obius
transformations of the metric. Thus one can consider the inverse problem as the existence problem
of an equivariant hyperbolic cone over a given metric.

Asking more, one should pay an additional price for that: we introduce a set of axioms,
which allow to define a reasonable candidate for a solution of the inverse problem.
This is the set 
$\harm$
of harmonic 4-tuples with respect to a given M\"obius structure
$M$. 
It has a natural structure
of a 3-dimensional manifold, which in the case of the canonical structure 
$M_0$
is homeomorphic
to the projectivized tangent bundle of
$\hyp^2$.
Note that
$\harm$
is automatically invariant under M\"obius transformations of 
$M$.

It follows from our axioms that any pair 
$(x,y)$
of different points in
$X$
uniquely determines a line 
$h=h_{(x,u)}$
in
$\harm$,
which consists of all pairs of different points
$(z,u)$
such that 4-tuple
$q=((x,y),(z,u))$
is harmonic. It turns out that
$h$
is homeomorphic to
$\R$
and, moreover,
$h$
is isometric to
$\R$
with respect to the naturally defined distance
$$|qq'|=\left|\ln\frac{d(x,z')d(y,z)}{d(x,z)d(y,z')}\right|,$$
$q'=((x,y),(z',u'))$,
where 
$d$
is any metric from
$M$
($|qq'|$
is independent of the choice of
$d$).

The pairs
$(x,y)$, $(z,u)$
are called {\em axes} of
$q\in\harm$.
Since every harmonic
$q$
has two axes, moving along a line in
$\harm$,
there is a possibility to change the axis at any moment. This leads to a notion
of special curves in
$\harm$,
which are called {\em zz-paths}. Every (finite) zz-path
$\si\sub\harm$
consists of a finite number of consecutive sides, every side is a segment of a line,
and adjacent sides meet each other at a common harmonic 4-tuple 
$q$
as the different
axes of
$q$.
The point of this construction is that while in general two different
$q$, $q'\in\harm$
cannot be connected by a segment of a line, they are always connected by 
a finite zz-path.

The length of a zz-path
$\si$
is the sum of the lengths of its sides
$|\si|$.
The 
$\de$-distance
on
$\harm$
is defined by
$$\de(q,q')=\inf_\si|\si|,$$
where the infimum is taken over all zz-paths between 
$q$
and
$q'$.
The 
$\de$-distance
is symmetric, nonnegative and satisfies the triangle inequality.
However, it is not clear that
$\de$
is positive, i.e.,
$\de$
is a pseudometric. Nevertheless, our main result says that lines are geodesics 
with respect to the 
$\de$-distance.

\begin{thm}\label{thm:main} Every line
$h\sub\harm$
is a geodesic with respect to the 
$\de$-distance, 
i.e.
$\de(q,q')=|qq'|$
for any
$q$, $q'\in h$. 
\end{thm}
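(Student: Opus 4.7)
The inequality $\de(q,q')\le|qq'|$ for $q,q'\in h$ is immediate: the segment of $h$ joining $q$ and $q'$ is itself a single-sided zz-path of length $|qq'|$. The content of the theorem lies in the reverse inequality $|qq'|\le\de(q,q')$, i.e., that no zz-path between two points of a line is shorter than their intrinsic line-distance.

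The plan is to produce a ``projection'' $f=f_h:\harm\to\R$ with two properties: (i) $|f(q)-f(q')|=|qq'|$ for all $q,q'\in h$, and (ii) $|f(p)-f(p')|\le|pp'|$ for all $p,p'$ on any line $h'\sub\harm$. Granting these, for any zz-path $\si$ from $q$ to $q'$ with vertices $q_0=q,q_1,\dots,q_n=q'$ and sides $s_1,\dots,s_n$,
\begin{equation*}
|f(q)-f(q')|\le\sum_{i=1}^n|f(q_{i-1})-f(q_i)|\le\sum_{i=1}^n|s_i|=|\si|,
\end{equation*}
so taking the infimum over $\si$ and invoking (i) gives $|qq'|\le\de(q,q')$.

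The candidate for $f_h$ is suggested by the model case $M=M_0$, in which $\harm$ is the projectivized tangent bundle of $\hyp^2$, the line $h_{(x,y)}$ projects isometrically onto the geodesic $xy$, and the required $f_h$ is the signed hyperbolic arclength of closest-point projection onto $xy$; this is $1$-Lipschitz by the $\CAT(0)$ property of $\hyp^2$. In the general M\"obius setting, the function $\phi(z)=\ln\bigl(d(x,z)/d(y,z)\bigr)$ on $X\sm\{x,y\}$, whose differences are independent of the representative $d\in M$, plays the analogous role on $h$: harmonicity forces $\phi(z)=\phi(u)$ for the free axis $(z,u)$, and the distance formula in the introduction gives $|qq'|=|\phi(z)-\phi(z')|$, so $f_h(q):=\phi(z)$ realizes (i). The extension of $f_h$ to all of $\harm$ should be a symmetric expression in $\phi(a),\phi(b),\phi(c),\phi(d)$ for a general $q=((a,b),(c,d))$, invariant under the dihedral symmetries of a harmonic $4$-tuple and specializing to $\phi(z)$ when $(a,b)=(x,y)$.

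The main obstacle is the verification of (ii). It reduces to a cross-ratio inequality: as $(c,d)$ ranges over a line $h_{(a,b)}$ with $(a,b)$ fixed, the variation of $f_h$ must be dominated by the intrinsic $h_{(a,b)}$-distance, which is governed by the analogous function $\psi_{(a,b)}(c)=\ln\bigl(d(a,c)/d(b,c)\bigr)$. In the canonical case this is exactly the contraction of orthogonal projection onto a geodesic; in general it must be extracted from the axioms defining $\harm$, and this is where the harmonicity hypotheses enter essentially. As a fallback I would complement this with an induction on the number of sides of the zz-path, shortening a zz-path of $n\ge2$ sides by replacing its two initial sides by a single segment or a controlled two-side piece; the one-side base case is immediate since two distinct lines meet in at most one point.
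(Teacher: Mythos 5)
Your overall frame --- the easy inequality via the one-sided zz-path, plus a projection to the line that restricts isometrically to $h$ and is $1$-Lipschitz along every line, followed by telescoping over the sides --- is exactly the strategy of the paper, and your step (i) is correct: harmonicity of type (1) gives $\phi(z)=\phi(u)$, and the distance formula gives $|qq'|=|\phi(z)-\phi(z')|$. But the proposal has a genuine gap at precisely the point you defer: property (ii) is never established, and it is the entire content of the theorem. It is not a formal consequence of harmonicity or of cross-ratio identities; in the paper it is Lemma~\ref{lem:midpoint_project_estimate} (strict contraction of the midpoint projection along lines), which rests on Proposition~\ref{pro:strict_monotone} --- the one and only place where the Increment Axiom (I) enters, and the paper is explicit that the theorem is asserted only for monotone M\"obius structures satisfying (I). Your proposal never invokes this (or any comparable quantitative) hypothesis, so ``extracted from the axioms defining $\harm$'' is conceding the central step rather than supplying it. There is also a structural obstruction your global $f_h$ glosses over: the paper's projections are axis-adapted ($\pr_c^s\circ j\neq\pr_c^s$ in general), so a projection is well defined along each side of a zz-path but changes its meaning at a vertex, where the roles of the two axes of $q=(a,b)$ swap; no single function on $\harm$ is known to be $1$-Lipschitz along both left and right lines simultaneously, which is the stronger statement your telescoping sum silently assumes. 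The paper's proof of Proposition~\ref{pro:sides_short} is devoted largely to circumventing exactly this: it uses the midpoint projection away from vertices, the equal-ratio projection (Lemma~\ref{lem:equal_ratio_project}) at vertices, interpolates by $s$-projections in $\ep$-neighborhoods of the vertices to obtain a \emph{continuous} map onto the line containing the side $\si$ whose image covers $\si$, and then lets $\ep\to 0$.

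Your fallback does not repair this. The induction step ``replace the two initial sides by a single segment'' is unavailable: two points of $\harm$ lie on a common segment only if they share an axis, and the paper notes explicitly that in general two harmonic pairs cannot be joined by a segment of a line --- this is why zz-paths with up to five sides are needed (Lemma~\ref{lem:zz_connected}). A ``controlled two-side piece'' replacing two given sides would require exactly the length comparison you are trying to prove, so the induction is circular. The base case observation about two lines meeting in at most one point is fine but carries no weight without the step. In short: correct skeleton, correct easy half, but the load-bearing contraction estimate --- axiom (I) via Proposition~\ref{pro:strict_monotone}, plus the vertex-gluing construction in Proposition~\ref{pro:sides_short} --- is missing.
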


This is not at all obvious or trivial. The precise statement of Theorem~\ref{thm:main}
requires to list axioms for M\"obius structures under which the theorem is true,
see sect.~\ref{sect:distance_segments}. The key property we require from a M\"obius structure 
to satisfy Theorem~\ref{thm:main} is the {\em Increment} axiom,
see sect.~\ref{subsect:increment_axiom}. To prove Theorem~\ref{thm:main}, for every
line
$h\sub\harm$
we define so called {\em midpoint projection} of
$\harm$
to
$h$.
The increment axiom allows to show that the midpoint projection decreases distances along 
zz-paths, which leads to Theorem~\ref{thm:main}.

{\it Aknowledgment.} The author is very much grateful to Viktor Schroeder for numerous
discussions on the topic of the paper, which lead, in particular, to the notion of 
a monotone M\"obius structure, and for the proof of Lemma~\ref{lem:unique_common_perpendicular}.

\section{M\"obius structures}
\label{sect:moebius_structures}

\subsection{Basic notions}
\label{subsect:basics}

Let
$X$
be a set. A 4-tuple
$q=(x,y,z,u)\in X^4$
is said to be {\em admissible} if no entry occurs three or
four times in
$q$.
A 4-tuple
$q$
is {\em nondegenerate}, if all its entries are pairwise
distinct. Let
$\cP_4=\cP_4(X)$
be the set of all ordered admissible 4-tuples of
$X$, $\reg\cP_4\sub\cP_4$
the set of nondegenerate 4-tuples.

A function
$d:X^2\to\wh\R=\R\cup\{\infty\}$
is called a {\em semi-metric}, if it is symmetric,
$d(x,y)=d(y,x)$
for each
$x$, $y\in X$,
positive outside of the diagonal, vanishes on the diagonal
and there is at most one infinitely remote point
$\om\in X$
for
$d$,
i.e. such that
$d(x,\om)=\infty$
for some
$x\in X\sm\{\om\}$.
Moreover, we require that if
$\om\in X$
is such a point, then
$d(x,\om)=\infty$
for all 
$x\in X$, $x\neq\om$.
A metric is a semi-metric that satisfies the triangle inequality.

A {\em M\"obius structure}
$M$
on
$X$
is a class of M\"obius equivalent semi-metrics on
$X$,
where two semi-metrics are equivalent if and only if they have
the same cross-ratios on every
$q\in\reg\cP_4$.

Given
$\om\in X$,
there is a semi-metric 
$d_\om\in M$
with infinitely remote point
$\om$.
It can be obtained from any semi-metric
$d\in M$
for which 
$\om$
is not infinitely remote by a {\em metric inversion},
$$d_\om(x,y)=\frac{d(x,y)}{d(x,\om)d(y,\om)}.$$
Such a semi-metric is unique up to a homothety, see \cite{FS},
and we use notation
$|xy|_\om=d_\om(x,y)$
for the distance between
$x$, $y\in X$
in that semi-metric. We also use notation
$X_\om=X\sm\{\om\}$.

There is a distinguished class of M\"obius structures called {\em ptolemaic}.
The property to be ptolemaic is characterized by the inequality

\begin{equation}\label{eq:ptolemaic}
d(x,y)d(z,u)\le d(x,z)d(y,u)+d(x,u)d(y,z)
\end{equation}
for every semi-metric
$d$
of the M\"obius structure and every 4-tuple
$q=(x,y,z,u)\in X^4$.
The property to be ptolemaic is invariant under any metric inversion, and this
invariance can serve as an equivalent definition of ptolemaic M\"obius structures.
It follows from (\ref{eq:ptolemaic}) that any semi-metric of a ptolemaic M\"obius
structure with infinitely remote point
$\om\in X$
is a metric on
$X_\om$,
i.e., it satisfies the triangle inequality.

Every M\"obius structure
$M$
on
$X$
determines the 
$M$-{\em topology}
whose subbase is given by all open balls centered at finite points
of all semi-metrics from
$M$
having infinitely remote points.

\begin{exa}\label{exa:canonical_moebius_circle} Our basic example is the 
{\em canonical} M\"obius structure 
$M_0$
on the circle
$X=S^1$.
We think of
$S^1$
as the unit circle in the plane,
$S^1=\set{(x,y)\in\R^2}{$x^2+y^2=1$}$.
For 
$\om=(0,1)\in X$
the stereographic projection
$X_\om\to\R$
identifies
$X_\om$
with real numbers 
$\R$.
We let
$d_\om$
be the standard metric on
$\R$,
that is,
$d_\om(x,y)=|x-y|$
for any
$x,y\in\R$.
This generates a M\"obius structure on
$X$
which is called {\em canonical}. The basic feature of the canonical M\"obius 
structure on
$X=S^1$
is that for any 4-tuple
$(\si,x,y,z)\sub X$
with the cyclic order 
$\si xyz$
we have 
$d_\si(x,y)+d_\si(y,z)=d_\si(x,z)$.
In particular, the canonical M\"obius structure is ptolemaic.
\end{exa}

\subsection{An alternative description}
\label{subsect:alternative}

The following is an alternative description of a M\"obius structure which
is convenient in many cases. For any semi-metric
$d$
on
$X$
we have three cross-ratios
$$q\mapsto \crr_1(q)=\frac{|x_1x_3||x_2x_4|}{|x_1x_4||x_2x_3|};
  \crr_2(q)=\frac{|x_1x_4||x_2x_3|}{|x_1x_2||x_3x_4|};
  \crr_3(q)=\frac{|x_1x_2||x_3x_4|}{|x_2x_4||x_1x_3|}$$
for 
$q=(x_1,x_2,x_3,x_4)\in\reg\cP_4$,
whose product equals 1, where
$|x_ix_j|=d(x_i,x_j)$.
We associate with 
$d$
a map 
$M_d:\reg\cP_4\to L_4$
defined by
\begin{equation}\label{eq:moeb_map}
M_d(q)=(\ln\crr_1(q),\ln\crr_2(q),\ln\crr_3(q)),
\end{equation}
where
$L_4\sub\R^3$
is the 2-plane given by the equation
$a+b+c=0$.
Two semi-metrics
$d$, $d'$
on
$X$
are M\"obius equivalent if and only
$M_d=M_{d'}$.
Thus a M\"obius structure on
$X$
is completely determined by a map 
$M=M_d$
for any semi-metric
$d$
of the M\"obius structure, and we often identify a M\"obius structure
with the respective map 
$M$.

Let 
$S_n$
be the symmetry group of 
$n$
elements. The group
$S_4$
acts on
$\reg\cP_4$
by entries permutations of any
$q\in\reg\cP_4$.
The group
$S_3$
acts on
$L_4$
by signed permutations of coordinates, where a permutation
$\si:L_4\to L_4$
has the sign 
``$-1$'' 
if and only if
$\si$
is odd. 

The {\em cross-ratio} homomorphism
$\phi:S_4\to S_3$
can be described as follows: a permutation of a tetrahedron
ordered vertices 
$(1,2,3,4)$
gives rise to a permutation of pairs of opposite edges
$((12)(34),(13)(24),(14)(23))$. 
We denote by 
$\sign:S_4\to\{\pm 1\}$
the homomorphism that associates to every odd permutation the sign 
``$-1$''.

One easily check that any M\"obius structure 
$M:\reg\cP_4\to L_4$
is equivariant with respect to the signed cross-ratio homomorphism,
\begin{equation}\label{eq:signed_cross-ratio_homomorphism}
M(\pi(q))=\sign(\pi)\phi(\pi)M(q) 
\end{equation}
for every
$q\in\reg\cP_4$, $\pi\in S_4$,
where
$\phi:S_4\to S_3$
is the cross-ratio homomorphism.

\subsection{Monotone M\"obius structures}

In what follows we assume that M\"obius structures we consider are ptolemaic.
We say that a M\"obius structure
$M$
on
$X=S^1$
is {\em monotone}, if it satisfies the following axioms
\begin{itemize}
 \item [(T)] Topology: $M$-topology
on
$X$
is that of
$S^1$; 
\item[(M)] Monotonicity: given a 4-tuple
$q=(x,y,z,u)\in X^4$
such that the pairs
$(x,y)$, $(z,u)$
separate each other,
we have
$$|xy|\cdot|zu|>\max\{|xz|\cdot|yu|,|xu|\cdot|yz|\}$$
for some and hence any semi-metric from
$M$.
\end{itemize}

A choice of
$\om\in X$
uniquely determines the interval
$xy\sub X_\om$
for any distinct
$x$, $y\in X$
different from
$\om$
as the arc in
$X$
with the end points
$x$, $y$
that does not contain
$\om$.
As an useful reformulation of Axiom~(M) we have

\begin{cor}\label{cor:interval_monotone} Assume for a nondegenerate
4-tuple
$q=(x,y,z,u)\in\reg\cP_4$
the interval
$xz\sub X_u$
is contained in
$xy$, $xz\sub xy\sub X_u$.
Then
$|xz|_u<|xy|_u$.
\end{cor}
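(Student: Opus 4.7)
The claim is essentially a direct translation of Axiom~(M) into the language of the inverted semi-metric $d_u$, so the plan is short and I expect no real obstacle; the only thing to be careful about is the transition from the interval-containment hypothesis to the separation hypothesis required by~(M).

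First I would unpack the topological hypothesis. Since $M$-topology agrees with the $S^1$-topology by~(T), the point $u \in X$ determines the two arcs of $S^1 \setminus \{x,u\}$, $S^1 \setminus \{y,u\}$, etc. The condition $xz \subset xy \subset X_u$ says that, on the arc of $S^1$ obtained by removing $u$, the point $z$ lies strictly between $x$ and $y$. Equivalently, in the cyclic order on $S^1$ the four points appear as $x, z, y, u$, which is exactly the statement that the pairs $(x,y)$ and $(z,u)$ separate each other. Thus we are in position to invoke Axiom~(M).

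Next I would apply Axiom~(M) to a semi-metric $d \in M$ for which $u$ is not infinitely remote (such $d$ exists because the $M$-topology makes $X$ compact). We only need one of the two comparisons that~(M) provides:
$$d(x,y)\,d(z,u) \;>\; d(x,z)\,d(y,u).$$
Now I divide both sides by $d(x,u)\,d(y,u)\,d(z,u)$ and recognize, via the metric inversion formula $|ab|_u = d(a,b)/\bigl(d(a,u)\,d(b,u)\bigr)$, that the left side equals $|xy|_u/d(x,u)$ while the right side equals $|xz|_u/d(x,u)$. Multiplying through by $d(x,u) > 0$ gives $|xy|_u > |xz|_u$, which is the claim. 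Since $|ab|_u$ depends only on $M$ (up to a global homothety that cancels in a strict inequality), the conclusion is independent of the chosen representative.

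The only mildly subtle step is the first one, the identification of the interval-containment hypothesis with the separation hypothesis of~(M); this uses only (T) and the elementary fact that removing a point from $S^1$ yields an arc in which the sub-arcs $xz$ and $xy$ share the endpoint $x$ and are ordered by inclusion precisely when $z$ lies between $x$ and $y$. Once this translation is made, the corollary is a one-line consequence of~(M) and the metric inversion formula.
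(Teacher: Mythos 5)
Your proof is correct and takes essentially the same route as the paper: translate the containment $xz\sub xy\sub X_u$ into the statement that $(x,y)$ and $(z,u)$ separate each other, apply Axiom~(M) to a semi-metric in which $u$ is finite, and pass to $|\cdot\cdot|_u$ via the metric inversion formula (a step the paper compresses into ``in particular''), with your explicit division being the honest version of that step since applying (M) directly to $d_u$ would read $\infty>\infty$. One cosmetic slip: after dividing $d(x,y)\,d(z,u)>d(x,z)\,d(y,u)$ by $d(x,u)\,d(y,u)\,d(z,u)$, the two sides are exactly $|xy|_u$ and $|xz|_u$ rather than $|xy|_u/d(x,u)$ and $|xz|_u/d(x,u)$, so the final multiplication by $d(x,u)$ is superfluous --- but since the spurious factor appears on both sides, the conclusion $|xz|_u<|xy|_u$ is unaffected.
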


\begin{proof} By the assumption, the pairs
$(x,y)$, $(z,u)$
separate each other. Hence, by Axiom~(M) we have
$|xz||yu|<|xy||zu|$
for any semi-metric from
$M$.
In particular,
$|xz|_u<|xy|_u$.
\end{proof}

\begin{lem}\label{lem:nozero_value} Assume a M\"obius structure
$M$
on
$X=S^1$
is monotone. Then
$M(q)\neq(0,0,0)$
for every
$q\in\reg\cP_4$.
\end{lem}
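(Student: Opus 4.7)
The plan is to unfold the condition $M(q)=(0,0,0)$ to an equality among three products of distances, and then to use Axiom~(M) together with the cyclic structure on $S^1$ to derive a contradiction.

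First, I would translate the hypothesis. By (\ref{eq:moeb_map}), the equality $M(q)=(0,0,0)$ means $\crr_i(q)=1$ for $i=1,2,3$, which is equivalent to the three products
\[
A_{12,34}=|x_1x_2|\cdot|x_3x_4|,\quad A_{13,24}=|x_1x_3|\cdot|x_2x_4|,\quad A_{14,23}=|x_1x_4|\cdot|x_2x_3|
\]
being all equal, evaluated in any semi-metric $d\in M$ (here I may as well pick some $d\in M$ for which none of the $x_i$ is infinitely remote, which exists because $q\in\reg\cP_4$).

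Next, I would exploit the circle topology, available through Axiom~(T). The four distinct points $x_1,x_2,x_3,x_4$ have a well-defined cyclic ordering on $S^1$; among the three unordered partitions of $\{x_1,\ldots,x_4\}$ into two pairs, exactly one partition consists of two pairs that separate each other on $S^1$. This distinguished pairing corresponds to exactly one of the three products $A_{12,34}$, $A_{13,24}$, $A_{14,23}$.

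Finally, I would apply Axiom~(M) to this separating pairing: it tells me that the corresponding product is \emph{strictly} greater than the remaining two. This contradicts the equality of all three products obtained in the first step, so $M(q)\ne(0,0,0)$. There is no real obstacle here; the argument is essentially a direct bookkeeping translation between the three cross-ratios and the three pairings of four points on $S^1$, combined with the strict inequality provided by the monotonicity axiom.
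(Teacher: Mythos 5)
Your proof is correct and is essentially the paper's argument: both derive a contradiction between the three equalities forced by $M(q)=(0,0,0)$ and the strict inequality of Axiom~(M) applied to the one pairing of the four points whose pairs separate each other. The only cosmetic difference is that the paper first normalizes by passing to a semi-metric with $u$ infinitely remote, turning the equal products into equal distances $|xy|_u=|xz|_u=|yz|_u$, whereas you apply Axiom~(M) directly to the products.
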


\begin{proof} Assume
$M(q)=(0,0,0)$
for 
$q=(x,y,z,u)\in\reg\cP_4$.
Then in a metric from
$M$
with infinitely remote point
$u$
we have
$|xy|_u=|xz|_u=|yz|_u$.
Whatever is the order of
$x,y,z$
on
$X_u=X\sm\{u\}$,
these equalities contradict the mononicity Axiom~(M). 
\end{proof}

\subsection{Increment axiom}
\label{subsect:increment_axiom}

Increment axiom for monotone M\"obius structures
has been introduced in \cite{Bu17}, where
it plays an important role since it implies the time 
inequality. In this paper, it also plays a key role in solving the 
inverse problem for M\"obius structures on the circle. We briefly
recall this axiom and some properties of monotone M\"obius structures
satisfying it.

We use notation
$\reg\cP_n$
for the set of ordered nondegenerate
$n$-tuples
of points in
$X=S^1$, $n\in\N$.
For 
$q\in\reg\cP_n$
and a proper subset
$I\sub\{1,\dots,n\}$
we denote by
$q_I\in\reg\cP_k$, $k=n-|I|$,
the 
$k$-tuple
obtained from
$q$
(with the induced order) by crossing out all entries which correspond to elements of
$I$.

(I) Increment Axiom: for any 
$q\in\reg\cP_7$
with cyclic order
$\co(q)=1234567$
such that 
$q_{247}$
and 
$q_{157}$
are harmonic, we have 
$$\crr_1(q_{345})>\crr_1(q_{123}).$$

For definition of harmonic 4-tuples see sect.~\ref{subsect:harmonic_4_tuples}.
It is proved in \cite[Proposition~7.10]{Bu17} that the canonical M\"obius
structure
$M_0$
on the circle
$X=S^1$
satisfies Increment Axiom. Moreover, the class
$\cI$
of monotone M\"obius structures on the circle which satisfy Axiom~(I)
contains an open in a fine topology neighborhood of 
$M_0$,
see \cite[Proposition~7.14]{Bu17}.

\section{Filling}
\label{sect:filling}

Here we define a space of harmonic pairs which will serve as a filling 
of a monotone M\"obius structure on the circle.

\subsection{Harmonic 4-tuples}
\label{subsect:harmonic_4_tuples}

Let
$M$
be a monotone M\"obius structure on the circle
$X=S^1$.
A 4-tuple
$q\in\reg\cP_4$
is said to be {\em harmonic} if
$M(q)\in L_4$
has a zero coordinate. It follows from Lemma~\ref{lem:nozero_value} for
$q$
harmonic,
$M(q)$
has a unique zero coordinate. Therefore, we have three types of harmonic
4-tuples
$q=(x,y,z,u)\in\reg\cP_4$,
determined by conditions
\begin{itemize}
 \item [(1)] $|xz|\cdot|yu|=|xu|\cdot|yz|$,
\item[(2)]  $|xu|\cdot|yz|=|xy|\cdot|zu|$,
\item[(3)] $|xy|\cdot|zu|=|xz|\cdot|yu|$,
\end{itemize}
for some and hence every semi-metric from
$M$,
which correspond to the first, the second and the third coordinate of
$M(q)$
respectively.

\begin{lem}\label{lem:three_embeddings} For
$i=1,2,3$
there is an embedding
$e_i:\reg\cP_3\to\reg\cP_4$
of the set 
$\reg\cP_3\sub X^3$
of nondegenerate 3-tuples, whose image
$e_i(\reg\cP_3)$
is the set of harmonic 4-tuples of type 
$(i)$.
\end{lem}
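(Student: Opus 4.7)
The plan is to construct each $e_i$ explicitly as the map assigning to a nondegenerate 3-tuple $(x,y,z)$ the unique fourth point $u\in X\sm\{x,y,z\}$ making $(x,y,z,u)$ harmonic of type~$(i)$. I give the proof in detail for $i=1$; the cases $i=2,3$ are entirely analogous, with the ratio function below replaced by the ratio appearing in the defining equation of the corresponding type.

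For $i=1$, the condition $\crr_1((x,y,z,u))=1$ reads $|yu|/|xu|=|yz|/|xz|$ and is M\"obius invariant. Pick any $\om\in X\sm\{x,y,z\}$ (possible since $S^1$ is uncountable) and work in $d_\om\in M$. Define $F:X\to[0,\infty]$ by $F(a)=|ya|_\om/|xa|_\om$, with continuous extensions $F(x)=\infty$, $F(y)=0$, and $F(\om)=1$ (the last obtained by rewriting $F(a)=\frac{d(y,a)}{d(x,a)}\cdot\frac{d(x,\om)}{d(y,\om)}$ in any semi-metric $d\in M$ for which $\om$ is finite, and letting $a\to\om$). By Axiom~(T), the points $x,y$ split $X$ into two closed arcs $\al_1,\al_2$. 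The crucial point is strict monotonicity of $F$ on each arc: if distinct $u_1,u_2$ lie on the same arc, the pairs $\{x,y\}$ and $\{u_1,u_2\}$ do not separate on $S^1$, so one of the other two pairings does, say $\{x,u_2\}$ and $\{y,u_1\}$; Axiom~(M) applied to $(x,u_2,y,u_1)$ then yields $|xu_2||yu_1|>|xu_1||yu_2|$, i.e.\ $F(u_1)>F(u_2)$. Together with the boundary values and the intermediate value theorem, each $F|_{\al_j}:\al_j\to[0,\infty]$ is a homeomorphism. With $k:=F(z)$, the point $z$ lies on one arc (say $\al_1$), hence there is a unique $u\in\al_2$ with $F(u)=k$, and by construction $u\notin\{x,y,z\}$. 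Define $e_1(x,y,z)=(x,y,z,u)$.

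The map $e_1$ is injective because its first three coordinates recover $(x,y,z)$. For continuity, suppose $(x_n,y_n,z_n)\to(x,y,z)$ in $\reg\cP_3$ and let $u_n$ be the fourth coordinate of $e_1(x_n,y_n,z_n)$. By compactness of $X$, every subsequence of $(u_n)$ has a further convergent subsequence with limit $u^*\in X$; passing to the limit in the harmonic equation forces $u^*\in\{z,u\}$. Since $u_n$ and $z_n$ lie on opposite arcs of $X\sm\{x_n,y_n\}$, the equality $u^*=z$ would put $z$ in the common boundary $\{x,y\}$, contradicting nondegeneracy. Thus $u_n\to u$, and $e_1$ is continuous; its inverse on the image is the restriction of a coordinate projection, so $e_1$ is an embedding. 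Finally, $e_1(\reg\cP_3)$ coincides with the set of type~$(1)$ harmonic 4-tuples, because any such $q=(a,b,c,d)\in\reg\cP_4$ equals $e_1(a,b,c)$ by uniqueness of $d$.

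The main obstacle is the strict monotonicity of $F$ on each arc: this is precisely where Axiom~(M) does the work, and one must carefully identify which of the three pairings of $\{x,y,u_1,u_2\}$ is the separating one on $S^1$ before invoking the axiom.
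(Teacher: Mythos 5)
Your proposal is correct, and it reaches the lemma by a genuinely different normalization than the paper, though the engine is the same. The paper sends one of the three \emph{given} points to infinity: for type $(i)$ it works in the semi-metric $d_{x_i}$ and constructs the new point $y_i$ as the metric midpoint of $x_{i+1},x_{i+2}$, getting existence from continuity of the distance function (quoted from \cite[Lemma~4.1]{Bu17}), and uniqueness together with the separation of the pairs $(x_i,y_i)$ and $(x_{i+1},x_{i+2})$ from Corollary~\ref{cor:interval_monotone}; the new point is inserted as the first entry, $e_i(t)=(y_i,x_1,x_2,x_3)$. You instead keep all three given points finite, put an auxiliary $\om$ at infinity, and characterize the fourth point as the second solution of the level equation $F(u)=F(z)$ for the ratio $F(a)=|ya|_\om/|xa|_\om$, extracting strict monotonicity of $F$ on each arc directly from Axiom~(M) after identifying which pairing of the four points separates on $S^1$. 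Both arguments are ``monotonicity from (M) plus continuity plus intermediate value theorem''; yours is packaged projectively via a ratio function, the paper's metrically via a midpoint, and the difference in which slot receives the new point is immaterial since the lemma only asks for some embedding whose image is the type-$(i)$ set. Your version buys two things the paper leaves implicit: an actual verification of the embedding property (injectivity, the compactness/subsequence argument for continuity of the completion map, and continuity of the inverse as a coordinate projection), and an explicit surjectivity step --- $F(d)=F(c)$ with $d\neq c$ forces $d$ onto the opposite arc by injectivity of $F$ on each arc, which is exactly the separation property the paper derives from Corollary~\ref{cor:interval_monotone}. One small debt to acknowledge: the continuity of $F$ away from its poles, which your intermediate value argument needs, is precisely the continuity of semi-metrics of $M$ in the $M$-topology that the paper cites from \cite[Lemma~4.1]{Bu17}; you use it tacitly, so state it with that reference rather than treating it as automatic.
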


\begin{proof} Given
$t=(x_1,x_2,x_3)\in\reg\cP_3$.
we take a semi-metric
$|\cdot\cdot|_i$
from
$M$
with infinitely remote point
$x_i$.
The distance function
$x\mapsto|x_{i+1}x|_i$
is continuous on
$X_{x_i}$
(see \cite[Lemma~4.1]{Bu17}), thus there is
$y_i\in X_{x_i}$
with
$|x_{i+1}y_i|_i=|y_ix_{i+2}|_i$
(indices are taken modulo 3). By Corollary~\ref{cor:interval_monotone},
$y_i$
is uniquely determined and moreover the pairs
$(x_i,y_i)$
and
$(x_{i+1},x_{i+2})$
separate each other. Now, we put
$e_i(t)=(y_i,x_1,x_2,x_3)$.
By constuction,
$e_i(t)$
satisfies
$$|x_{i+1}y_i|\cdot|x_ix_{i+2}|=|y_ix_{i+2}|\cdot|x_ix_{i+1}|$$
for any semi-metric from
$M$,
and thus 
$e_i(t)$
is harmonic of type
$(i)$.
Conversely, given a harmonic 4-tuple
$q=(x,y,z,u)$,
we take either of
$y$, $z$, $u$
as an infinitely remote point and see that
$x$
is the midpoint between remaining two ones for harmonicity type (1), (2), (3)
respectively. Therefore, every harmonic 4-tuple of type 
$(i)$ 
is
$e_i(t)$
for an appropriate
$t\in\reg\cP_3$.
\end{proof}

The set 
$\reg\cP_3\sub X^3$
in the induced topology consists of two connected components each of which
is homeomorphic to the unit tangent bundle
$U\hyp^2$
of the hyperbolic plane
$\hyp^2$,
that is, it is the trivial 
$S^1$-bundle
over 
$\R^2$.

By Lemma~\ref{lem:three_embeddings}, 
$e_i(\reg\cP_3)$
is the set of harmonic 4-tuples of type 
$(i)$.
Therefore, the set of harmonic 4-tuples consists of six connected
components each of which is homeomorphic to
$\R^2\times S^1$.
The group
$S_4$
acting on
$\reg\cP_4$
permutes these components with the stabilizer of each one isomorphic to the 
cyclic group
$\Z_4$.
These facts are not used in what follows, they only describe the general structure
of the space of harmonic 4-tuples.

\subsection{Harmonic pairs}
\label{subsect:harm_pairs}

As a topological space, the required filling is defined as the set 
$\harm$
of harmonic pairs. It is convenient to use unordered pairs 
$(x,y)\sim(y,x)$
of distinct points on
$X=S^1$,
and we denote the set of them by
$\ay=S^1\times S^1\sm\De/\sim$,
where
$\De=\set{(x,x)}{$x\in S^1$}$
is the diagonal. A pair 
$(a,b)\in\ay\times\ay$
is harmonic if
\begin{equation}\label{eq:harmonic}
|xz|\cdot|yu|=|xu|\cdot|yz|
\end{equation}
for some and hence any semi-metric of the M\"obius structure, where
$a=(x,y)$, $b=(z,u)$.
That is, we use the first type of harmonic 4-tuples to define harmonic
pairs. The choice of the type is irrelevant to our construction
because different types of harmonicity are permuted with each other by
the group
$S_4$.

Note that the pairs of points
$a$, $b$
separate each other for every harmonic pairs
$(a,b)$.
This follows from mononicity of
$M$,
see the proof of Lemma~\ref{lem:three_embeddings}.

The set 
$\harm$
of the harmonic pairs is a 3-dimensional subspace in
$\ay\times\ay$
given by Equation~(\ref{eq:harmonic}). There is an involution
$\pi(x,y,z,u)=(y,x,u,z)$
acting on the set of harmonic 4-tuples of the first type which factors that
set to
$\harm$.
Therefore,
$\harm$
is homeomorphic the projectivized tangent bundle of
$\hyp^2$.

Given
$q=(a,b)\in\harm$,
the pair
$a\in\ay$
is called the {\em left axis} and the pair
$b\in\ay$
the {\em right axis} of 
$q$.

There is a canonical involution 
$j:\harm\to\harm$
without fixed points given by
$j(a,b)=(b,a)$.
The quotient space we denote by
$\hm:=\harm/j$.
In other words,
$\hm$
is the set of unordered harmonic pairs of unordered pairs 
of points in
$X$.
Note that
$j(q)=(b,a)$
is harmonic with the left axis
$b$
and the right axis
$a$
for every harmonic pair
$q=(a,b)\in\harm$.

The space
$\harm$
has two canonical structures of a locally trivial bundle
$\pr_i:\harm\to\ay$
with respect to the factor projections
$\pr_i:\ay\times\ay\to\ay$, $i=1,2$,
$\pr_1(a,b)=a$, $\pr_2(a,b)=b$.
It follows from Lemma~\ref{lem:three_embeddings}, that the fibers of
$\pr_i$
are homeomorphic to an open arc in
$S^1$,
i.e. to
$\R$.
We obviously have
$\pr_i\circ j=\pr_{i+1}$
for 
$i=1,2$,
where the indices are taken modulo 2. Both
$\R$-bundles
$\pr_1$, $\pr_2$
are nontrivial, i.e. 
$\harm$
is not homeomorphic to the product
$\ay\times\R$.

\subsection{Lines and zig-zag paths in $\harm$}
\label{subsect:lines_harm}

A {\em left line} 
$\lh_a$, $a\in\ay$,
in
$\harm$
is the subset
$\lh_a=\pr_1^{-1}(a)\sub\harm$.
The pair
$a\in\ay$
is called the {\em axis} of
$\lh_a$.
Similarly, a {\em right line}
$\rh_b$, $b\in\ay$,
is the subset
$\rh_b=\pr_2^{-1}(b)\sub\harm$.
The pair
$b\in\ay$
is called the {\em axis} of
$\rh_b$.
Note that
$j(\lh_a)=\rh_a$
and
$j(\rh_b)=\lh_b$.

Every fiber of the fibration 
$\pr_1:\harm\to\ay$
is a left line, while every fiber of the fibration
$\pr_2:\harm\to\ay$
is a right line. Thus every left (right) line is homeomorphic to
$\R$.
The axis 
$a$
of
$\lh_a$
is the common left axis for all
$q\in\lh_a$.
The axis 
$b$
of
$\rh_b$
is the common right axis for all
$q\in\rh_b$.

A line in
$\hm$
is the image of a left line or a right line under the canonical projection
$\harm\to\hm$.
Thus in
$\hm$
we do not distinguish left and right lines. The notion of the axis of 
a line is preserved by
$j$,
and we denote by
$\h_a\sub\hm$
a line with the axis
$a\in\ay$.

We say that 
$b$, $b'\in\ay$
are in the {\em strong causal relation} if either of them lies
on an open arc in
$X$
determined by the other one (more for this terminology see in \cite{Bu17}).

\begin{lem}\label{lem:common_perp} For different
$q=(a,b)$, $q'=(a,b')$
lying of on a left line
$\lh_a$,
the pairs
$b$, $b'\in\ay$
are in the strong causal relation. Conversely, given
$b$, $b'\in\ay$
in the strong causal relation, there is a left line
$\lh_a$
such that 
$q=(a,b)$, $q'=(a,b')\in\lh_a$.
Similar properties hold true also for right lines and lines in
$\hm$.
\end{lem}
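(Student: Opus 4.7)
I will handle the two directions separately. The forward direction is essentially a cyclic-order consequence of the monotonicity of distance along an arc, while the converse rests on an intermediate value argument applied to a cross-ratio function on a suitable arc.

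\emph{Forward direction.} Fix $a=\{x,y\}$ and take $y$ as the infinitely remote point. A direct computation from the definitions of $\crr_1$ and of the inverted semi-metric $|\cdot|_y$ gives
$\crr_1(x,y,z,u)=|xz|_y/|xu|_y$,
so the type-(1) harmonic condition reduces to $|xz|_y=|xu|_y$. By Axiom~(T) each of the two arcs $\alpha,\beta$ of $S^1\setminus\{x,y\}$ is homeomorphic to $\R$, and by Corollary~\ref{cor:interval_monotone} the function $v\mapsto|xv|_y$ is strictly monotone on each of them, ranging from $0$ near $x$ to $\infty$ near $y$. Since the axes of a harmonic pair separate each other, every $q=(a,b)\in\lh_a$ has $b=\{z,u\}$ with (say) $z\in\alpha$ and $u\in\beta$, and the condition $|xu(z)|_y=|xz|_y$ defines an order-preserving homeomorphism $z\mapsto u(z)\colon\alpha\to\beta$, where both arcs are oriented from $x$ to $y$. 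For two distinct $q_1,q_2\in\lh_a$ I may assume $z_1$ precedes $z_2$ on $\alpha$; monotonicity then forces $u(z_1)$ to precede $u(z_2)$ on $\beta$ in the same sense, yielding the cyclic order $x,z_1,z_2,y,u(z_2),u(z_1)$ on $S^1$. In this configuration both points of $b_2$ lie inside one open arc of $S^1\setminus b_1$, which is precisely the strong causal relation.

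\emph{Converse direction.} Suppose $b=\{z,u\}$ and $b'=\{z',u'\}$ are in strong causal relation, with $z',u'$ lying in an arc $\alpha''$ of $S^1\setminus b$; let $\beta''$ be the complementary arc and $\gamma\subset\alpha''$ the sub-arc strictly between $z'$ and $u'$. Using symmetry of the harmonic condition under swapping $(x,y)\leftrightarrow(z,u)$, the forward analysis applied in the $u$-metric produces, for every $x\in\alpha''$, a unique $y(x)\in\beta''$ making $(x,y(x),z,u)$ harmonic, with $x\mapsto y(x)$ continuous and with $y(z'),y(u')$ lying in the interior of $\beta''$. Now define $F\colon\gamma\to(0,\infty)$ by $F(x)=\crr_1(x,y(x),z',u')$. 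As $x\to z'$ inside $\gamma$ the factor $|xz'|$ tends to $0$ while the other three distances appearing in $F$ have strictly positive finite limits, so $F(x)\to 0$; symmetrically $F(x)\to\infty$ as $x\to u'$. Continuity of $F$ and the intermediate value theorem provide some $x_0\in\gamma$ with $F(x_0)=1$, so $(x_0,y(x_0),z',u')$ is harmonic. Setting $a=\{x_0,y(x_0)\}$, both $(a,b)$ and $(a,b')$ are harmonic, whence $q,q'\in\lh_a$. The statements for right lines and for lines in $\hm$ follow at once by applying the involution $j$ and passing to the quotient $\hm=\harm/j$.

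\emph{Main obstacle.} The delicate point is the boundary analysis in the converse: one must verify that $y(x)$ stays in the interior of $\beta''$ as $x$ approaches an endpoint of $\gamma$, so that only the factor involving $x$ itself degenerates in $F$. This rests on continuity of $x\mapsto y(x)$ together with the fact that $y(z'),y(u')$ are interior points of $\beta''$, which is what separates the two degenerations and enables the use of the intermediate value theorem. Uniqueness of the common perpendicular $a$, not required for the present statement, would follow from strict monotonicity of $F$ and is presumably the content of the separately cited Lemma~\ref{lem:unique_common_perpendicular}.
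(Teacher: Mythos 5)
Your proof is correct, and the two directions compare differently with the paper. The forward direction is essentially the paper's argument: both rest on Corollary~\ref{cor:interval_monotone}, you working in the semi-metric $d_y$ with the order-preserving matching $z\mapsto u(z)$ between the two arcs, the paper working in $d_x$ where $y$ is the common midpoint of $zu$ and $z'u'$ and nesting of intervals is propagated by Axiom~(M). Your converse, however, takes a genuinely different route. The paper uses both reflections $\rho_b$, $\rho_{b'}$ supplied by Lemma~\ref{lem:three_embeddings} and finds a fixed point of the composition $\rho_b\circ\rho_{b'}$, using that it maps the closed arc $b^+$ into its interior; you use only the single reflection $x\mapsto y(x)=\rho_b(x)$ and apply the intermediate value theorem to the M\"obius-invariant function $F(x)=\crr_1(x,y(x),z',u')$ on the arc $\gamma$ between $z'$ and $u'$. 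Since $F(x_0)=1$ is equivalent to $\rho_{b'}(x_0)=\rho_b(x_0)$, i.e.\ to $x_0$ being a fixed point of $\rho_{b'}\circ\rho_b$, the two arguments locate the same point, but yours is more economical: it needs only continuity of one reflection and of the cross-ratio, with the boundary degeneration $F\to 0$ and $F\to\infty$ replacing the paper's containment $\rho(b^+)\subset\intr(b^+)$ — and you correctly identify that boundary analysis as the delicate step. Two details deserve a word: continuity of $x\mapsto y(x)$ is asserted rather than proved (it follows since a monotone bijection between open arcs is a homeomorphism, or by citing the reflection homeomorphism from \cite{Bu17} as the paper does), and the limit computation for $F$ should fix a background semi-metric whose infinitely remote point avoids the four points in play, e.g.\ $d_u$, so that the three nondegenerating distances are finite and continuous; this is harmless because $\crr_1$ is independent of the choice. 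Finally, your aside that uniqueness of the common perpendicular would follow from strict monotonicity of $F$ is plausible but unproven; the paper's Lemma~\ref{lem:unique_common_perpendicular} instead compares the distances $l$, $l'$ computed with two different infinitely remote points.
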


\begin{proof} The arguments can be found in \cite[Proposition~5.8, Proposition~3.2(b)]{Bu17}.
For convenience of the reader we briefly recall them.
 
Let
$a=(x,y)$, $b=(z,u)$, $b'=(z',u')\in\ay$,
where
$q=(a,b)$, $q'=(a,b')$
lie on a left line
$\lh_a$.
Taking a semi-metric from
$M$
with infinitely remote point
$x$,
we observe that
$y$
is the midpoint of the segments
$zu$, $z'u'\sub X_x$.
Since
$b\not=b'$,
we can assume that 
$z'y\sub zy$.
By Axiom~(M),
$|z'y|_x<|zy|_x$,
and thus
$|u'y|_x<|uy|_x$.
Then again by Axiom~(M),
$u'y\sub uy$.
It follows that
$b'$
lies on an open arc in
$X$
determined by
$b$,
i.e.,
$b$, $b'$
are in the strong causal relation.

Conversely, Lemma~\ref{lem:three_embeddings} implies that for every
$b=(z,u)\in\ay$
there is a well defined involutive homeomorphism
$\rho_b:X\to X$, 
called the {\em reflection} with respect to
$b$,
that fixes
$z$, $u$,
such that the pair
$(a,b)$
is harmonic for every
$x\in X\sm b$,
where
$a=(x,\rho_b(x))$.
For
$b$, $b'\in\ay$
in the strong causal relation, we take the composition
$\rho=\rho_b\circ\rho_{b'}$
of the respective reflection and note that 
$\rho(b^+)\sub\intr(b^+)$,
where
$b^+\sub X$
is the closed arc determined by
$b$
that does not include
$b'$.
Thus there is a fixed point
$x\in\intr b^+$
of
$\rho$.
Then
$a=(x,y)\in\ay$, 
where
$y=\rho_{b'}(x)$,
is preserved by
$\rho_b$, $\rho_{b'}$,
and 
$q=(a,b)$, $q=(a,b')\in\lh_a$.
\end{proof}
The pair
$a\in\ay$
above is called a {\em common perpendicular} to
$b$, $b'$.
We postpone the proof of uniqueness to sect.~\ref{subsect:distance_harmonic_pairs},
see Lemma~\ref{lem:unique_common_perpendicular}.

We say that
$d\in\ay$
{\em separates}
$b$
and
$c\in\ay$
if
$b$
and
$c$
lie on different open arcs in
$X$
defined by
$d$.
Note that in this case
$b$, $c$, $d$
are in the strong causal relation with each other.

Given a left line
$\lh_a\sub\harm$
and distinct
$q=(a,b)$, $q'=(a,b')\in\lh_a$,
we define the {\em left segment}
$qq'\sub\lh_a$
as the union of 
$q$, $q'$
and all of
$q''=(a,b'')\in\lh_a$
such that 
$b''$
separates
$b$, $b'$.
The points
$q$, $q'$
are the {\em ends} of
$qq'$.
Similarly, we define right segments on a right line. More generally,
a segment 
$qq'$
in
$\harm$ ($\hm$)
is a segment of line in
$\harm$ ($\hm$).
In this case the harmonic pairs
$q$, $q'$
have a common axis.

By the first part of Lemma~\ref{lem:common_perp},
$b$, $b'$
are in the strong causal relation. Denote by
$b^-\sub X$
the open arc determined by
$b$
that contains
$b'$,
and by
$(b')^-$
the open arc determined by
$b'$
that contains
$b$.
Then
$a$
does not meet
$b^-\cap(b')^-$
because 
$a,b$ 
separate each other as well as
$a,b'$.
By Lemma~\ref{lem:three_embeddings}, for every
$z''\in b^-\cap(b')^-$
there is
$u''\in X$
such that
$(a,b'')$
is harmonic, i.e.,
$(a,b'')\in\lh_a$,
where
$b''=(z'',u'')$.
Thus
$b''$
is in the strong causal relation with 
$b$
as well as with
$b'$.
Hence,
$u''\in b^-\cap(b')^-$.
In other words, the intersection
$b^-\cap(b')^-$
is invariant under the reflection
$\rho_a:X\to X$,
see proof of Lemma~\ref{lem:common_perp}.
We conclude that the segment
$qq'\sub\lh_a$
is homeomorphic to the standard segment
$[0,1]$.

A {\em zig-zag} path, or zz-path, 
$S\sub\harm$
is defined as an alternating finite (maybe empty) sequence  of left and right segments 
$\si_i$
in
$\harm$,
where consecutive segments 
$\si_i$, $\si_{i+1}$
have a common end. Segments 
$\si_i$
are also called {\em sides} of
$S$.

\begin{lem}\label{lem:zz_connected} Given
$q$, $q'\in\harm$,
there is a zz-path
$S$
in
$\harm$
with at most five sides that connects
$q$
and
$q'$.
\end{lem}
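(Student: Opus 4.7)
My plan is a direct construction of the zz-path, with case analysis based on coincidences among the axes of $q = (a, b)$ and $q' = (a', b')$.

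If $q$ and $q'$ share an axis (e.g.\ $a = a'$), Lemma~\ref{lem:common_perp} immediately forces the non-shared axes to be in strong causal relation, so $q, q'$ lie on a single segment of $\lh_a$, a one-side zz-path; similarly for $b = b'$ via a right segment. Mixed-type coincidences (such as $a = b'$) are handled by inserting one or two intermediate harmonic pairs to swap the role of left and right axes, producing a zz-path of at most three sides. I may therefore assume $a, b, a', b'$ are pairwise distinct.

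In this generic case I construct a 5-side zz-path of alternating type L--R--L--R--L,
\[
q = (a, b) \to (a, c_1) \to (a_1, c_1) \to (a_1, c_2) \to (a', c_2) \to (a', b') = q',
\]
as follows. First I pick any $a_1 \in \ay$ whose two points lie in a single open arc of $S^1 \setminus (a \cup a')$; such an arc is nonempty since $a \cup a'$ has at most four points, and the choice automatically places $a_1$ in strong causal relation with both $a$ and $a'$. Applying Lemma~\ref{lem:common_perp} to the pairs $(a, a_1)$ and $(a_1, a')$ produces common perpendiculars $c_1$ and $c_2$, so that $(a, c_1), (a_1, c_1), (a_1, c_2), (a', c_2)$ are all harmonic. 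A further application of Lemma~\ref{lem:common_perp} to the two harmonic pairs $(a_1, c_1), (a_1, c_2)$, which both lie on $\lh_{a_1}$, shows that $c_1, c_2$ are themselves in strong causal relation.

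To conclude that the candidate is a genuine zz-path, by Lemma~\ref{lem:common_perp} it remains to verify the five strong causal relations on consecutive segments: $(b, c_1), (a, a_1), (c_1, c_2), (a_1, a'), (c_2, b')$. The middle three have just been established; only $(b, c_1)$ and $(c_2, b')$ remain. These are open conditions, and since $c_1, c_2$ depend continuously on $a_1$ (through the construction of common perpendiculars in Lemma~\ref{lem:common_perp}), the set of $a_1$ satisfying both simultaneously is open. The main obstacle, and where the argument requires care, is showing this set is nonempty: I would do this by analyzing the cyclic arrangement of the up to eight relevant boundary points, and choosing the arc of $S^1 \setminus (a \cup a')$ containing $a_1$ so as to avoid the (codimension-one) failure loci for the two remaining SCR conditions; symmetric configurations where no suitable arc exists for type L--R--L--R--L are handled by the opposite alternation R--L--R--L--R.
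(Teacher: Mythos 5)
Your five-side construction is exactly the paper's: the paper likewise chooses an auxiliary pair $a''$ on one of the open arcs into which $a\cup a'$ divides $X$ (so that $a''$ is automatically in strong causal relation with both $a$ and $a'$), takes common perpendiculars $\wt b$ to $a,a''$ and $\wt b'$ to $a'',a'$ via Lemma~\ref{lem:common_perp}, and forms the alternating path through $(a,\wt b)$, $(a'',\wt b)$, $(a'',\wt b')$, $(a',\wt b')$; your $a_1, c_1, c_2$ are $a'', \wt b, \wt b'$ up to notation. Where your write-up goes astray is the final paragraph: you claim that to get a genuine zz-path one must still \emph{verify} the strong causal relations $(b,c_1)$ and $(c_2,b')$, and you leave this to an unfinished analysis of cyclic arrangements of up to eight points, with a fallback alternation R--L--R--L--R for ``symmetric configurations.'' As written this is a gap --- you explicitly defer what you call the main obstacle --- but the obstacle is spurious.

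A segment is defined between \emph{any} two distinct points of a line; no causal hypothesis enters the definition, and the paper checks just before introducing zz-paths that every such segment is homeomorphic to $[0,1]$. Since $(a,b)$ and $(a,c_1)$ are both harmonic, they both lie on the left line $\lh_a=\pr_1^{-1}(a)$, so the side joining them exists unconditionally; the strong causal relation of $b$ and $c_1$ is then the \emph{conclusion} of the first half of Lemma~\ref{lem:common_perp} --- precisely the argument you yourself applied to $c_1, c_2$ via $\lh_{a_1}$, and which applies verbatim to $(b,c_1)$ on $\lh_a$ and to $(c_2,b')$ on $\lh_{a'}$. Strong causality is needed as a \emph{hypothesis} only in the converse direction of that lemma, which you correctly invoke to produce the perpendiculars $c_1$ and $c_2$. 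Once this is observed, the openness/nonemptiness discussion and the ``failure loci'' evaporate, and your opening case analysis becomes unnecessary as well: for coincidences such as $a=a'$ or $a=b'$ the generic construction runs verbatim, with some sides possibly degenerate --- which is exactly why the lemma asserts only ``at most'' five sides.
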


\begin{proof} Let
$q=(a,b)$, $q'=(a',b')$.
The pairs
$a$, $a'\in\ay$
separate
$X$
into (at most four) open arcs. Taking
$a''\in\ay$
on such an arc, we see that
$a''$
is in the strong causal relation with
$a$
as well as with
$a'$.
By Lemma~\ref{lem:common_perp}, there is a common
perpendicular
$\wt b$
to
$a$, $a''$,
and there is a common perpendicular
$\wt b'$
to
$a'$, $a''$.
Then the pairs
$\wt q=(a,\wt b)$, $q''=(a'',\wt b)$,
$\wt q''=(a'',\wt b')$, $\wt q'=(a',\wt b')$ 
are harmonic, and the alternating sequence
$$S=q\wt q,\ \wt q q'',\  q''\wt q'',\ \wt q''\wt q',\ \wt q'q'$$
of left and rigth segments connects
$q$, $q'$
having at most 5 sides.
\end{proof}

A zz-path in
$\hm$
is the image of a zz-path in
$\harm$
under the canonical projection
$\harm\to\hm$.
This is also an alternating (in obvious sence) finite sequence
of segments in
$\hm$,
where consecutive segments have a common end.
Lemma~\ref{lem:zz_connected} holds true also in
$\hm$.

\section{Pseudometric on $\harm$}
\label{sect:pseudometric}

\subsection{Distance between harmonic pairs with common axis}
\label{subsect:distance_harmonic_pairs}

Given two harmonic pairs in
$q$, $q'\in\harm$
with a common axis, say
$q=(a,b)$
and
$q'=(a,b')$,
we define {\em the distance}
$|qq'|$
between them as
\begin{equation}\label{eq:distance}
|qq'|=|j(q)j(q')|=\left|\ln\frac{|xz'|\cdot|yz|}{|xz|\cdot|yz'|}\right|
\end{equation}
for some and hence any semi-metric on
$X$
from
$M$,
where
$a=(x,y)$, $b=(z,u)$, $b'=(z',u')\in\ay$,
and
$j:\harm\to\harm$
is the canonical involution.
Note that
\begin{equation}\label{eq:distance_different}
|qq'|=\left|\ln\frac{|xu'|\cdot|yu|}{|xu|\cdot|yu'|}\right|=
             \left|\ln\frac{|xu'|\cdot|yz|}{|xz|\cdot|yu'|}\right|=
             \left|\ln\frac{|xz'|\cdot|yu|}{|xu|\cdot|yz'|}\right|
\end{equation}
by harmonicity of
$q$, $q'$.
In this way, (\ref{eq:distance}) defines the distance along
the left hyperbolic line
$\lh_a\sub\harm$
as well as along the right hyperbolic line
$\rh_a\sub\harm$.

\begin{lem}\label{lem:distance_additive} Given 
$a\in\ay$
and
$q=(a,b)$, $q'=(a,b')$, $q''=(a,b'')\in\lh_a$
such that
$b'$
separates
$b$
and
$b''$,
then 
$|qq''|=|qq'|+|q'q''|$.
A similar property holds true also for right lines.
\end{lem}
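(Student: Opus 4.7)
The plan is to reduce the additivity to the trivial identity
$$\ln\frac{t(b)}{t(b'')}=\ln\frac{t(b)}{t(b')}+\ln\frac{t(b')}{t(b'')}$$
after parameterizing the harmonic pairs on $\lh_a$ by a single positive real, and then using the separation hypothesis to force the two signed summands on the right to share a sign, so that the absolute values add.

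First I would choose the semi-metric $|\cdot|_x\in M$ with infinitely remote point $x$, where $a=(x,y)$. Any harmonic pair $c$ with left axis $a$ is of the form $c=\{v,v'\}$ with $|yv|_x=|yv'|_x$ by harmonicity; call this common value $t(c)>0$. Using the inversion formula $d_x(y,v)=d(y,v)/(d(y,x)d(v,x))$ applied to an arbitrary semi-metric $d\in M$, direct cancellation gives
$$\frac{|xz'|\cdot|yz|}{|xz|\cdot|yz'|}=\frac{|yz|_x}{|yz'|_x}=\frac{t(b)}{t(b')},$$
so (\ref{eq:distance}) yields $|qq'|=|\ln(t(b)/t(b'))|$, and likewise for $|q'q''|$ and $|qq''|$. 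The desired additivity $|qq''|=|qq'|+|q'q''|$ is therefore equivalent to the statement that $\ln(t(b)/t(b'))$ and $\ln(t(b')/t(b''))$ share a sign, i.e.\ that $t(b')$ lies strictly between $t(b)$ and $t(b'')$.

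The final step is to extract this sign condition from the separation hypothesis. The pair $b'$ cuts $X$ into two arcs; one of them, the ``inner'' arc, is the component of $X\sm b'$ not containing $x$ (equivalently, the component containing $y$, since $y$ lies between the two points of $b'$ in $X_x$). By Corollary~\ref{cor:interval_monotone}, on each side of $y$ in $X_x$ the function $v\mapsto|yv|_x$ is strictly monotone, so a harmonic pair $c$ with axis $a$ lies entirely in the inner arc of $b'$ if and only if $t(c)<t(b')$, and entirely in the outer arc if and only if $t(c)>t(b')$. Hence $b'$ separating $b$ and $b''$ is equivalent to $t(b')$ being strictly between $t(b)$ and $t(b'')$, which is precisely the sign condition needed. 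The right-line case then follows from the left-line case by applying the canonical involution $j$, which preserves the distance by the definition (\ref{eq:distance}). I foresee no real obstacle; the only careful point is matching the combinatorial notion of ``separation'' of pairs on the circle with the analytic ``betweenness'' of the parameters $t$, for which Axiom~(M), through Corollary~\ref{cor:interval_monotone}, is essential.
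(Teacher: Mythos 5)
Your proof is correct and follows essentially the same route as the paper: both normalize in the semi-metric with infinitely remote point $x$, observe that $y$ is the midpoint of each pair so that each $b$ on $\lh_a$ is encoded by the single radius $t(b)=|yz|_x$, and reduce additivity to the nesting $z''u''\sub z'u'\sub zu$ (which the paper asserts as a WLOG from the separation hypothesis and you derive explicitly via Corollary~\ref{cor:interval_monotone}). Your extra care in matching separation of pairs with betweenness of the parameters $t$, and in handling the right-line case via $j$, merely makes explicit what the paper leaves implicit.
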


\begin{proof} Let
$a=(x,y)$, $b=(z,u)$, $b'=(z',u')$, $b''=(z'',u'')$.
In the semi-metric from
$M$
with infinitely remote point
$x$, $y$
is the midpoint of the segments
$zu$, $z'u'$, $z''u''\sub X_x$.
Using that
$b'$
separates
$b$
and
$b''$,
we can assume without loss of generality that
$z''u''\sub z'u'\sub zu$.
Then
$|yz''|_x<|yz'|_x<|yz|_x$
and thus
$$|qq'|=\ln\frac{|yz|_x}{|yz'|_x},\ |q'q''|=\ln\frac{|yz'|_x}{|yz''|_x},
\ |qq''|=\ln\frac{|yz|_x}{|yz''|_x}.$$
Therefore
$|qq''|=|qq'|+|q'q
''|$.
\end{proof}

Now, we can prove uniqueness of the common perpendicular.

\begin{lem}\label{lem:unique_common_perpendicular} Given
$b$, $b'\in\ay$
in the strong causal relation, there is at most one common perpendicular
$a\in\ay$
to
$b$, $b'$.
\end{lem}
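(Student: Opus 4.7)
I would proceed by contradiction. Suppose $a = (x, y)$ and $a' = (x', y')$ are two distinct common perpendiculars to $b = (z, u)$ and $b' = (z', u')$. Then the four pairs $q_1 := (a, b)$, $q_2 := (a, b')$, $q_3 := (a', b')$, $q_4 := (a', b)$ are all harmonic, and they sit as a ``rectangle'' in $\harm$ whose four sides are nontrivial segments of the lines $\lh_a$, $\rh_{b'}$, $\lh_{a'}$, $\rh_b$, respectively.

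The first step is to fix the cyclic order of the eight points $x, y, x', y', z, u, z', u'$ on $S^1$. The hypothesis gives $b, b'$ in strong causal relation. By the first half of Lemma~\ref{lem:common_perp} applied to $\rh_b$ (which contains $q_1, q_4$) the pairs $a, a'$ are also in strong causal relation, and from sect.~\ref{subsect:harm_pairs} each of $a, a'$ separates each of $b, b'$. These constraints determine, after relabeling if necessary, a definite cyclic order, which I will take to be $z, z', x, x', u', u, y', y$.

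The second step is to combine the four harmonicity relations with Axiom~(M) to derive a contradiction. Each harmonicity gives an identity of the form $|xz|\cdot|yu|=|xu|\cdot|yz|$, which via the distance formula~\eqref{eq:distance} and its variants~\eqref{eq:distance_different} yields several equivalent cross-ratio expressions for each of the four side-lengths $T_a, T_{a'}, D_b, D_{b'}$ of the rectangle (and the equivalences among these expressions \emph{are} the harmonicity identities). From the cyclic order one can read off 4-tuples whose pairs separate --- natural candidates are $(x, y', x', y)$ and $(z, x', z', x)$ --- and apply Axiom~(M) to obtain a strict cross-ratio inequality. The aim is to combine this strict inequality with the algebraic identities coming from the four harmonicities so that a single cross-ratio expression is forced both to equal one (by harmonicity) and to be strictly larger than one (by~(M)), yielding the contradiction. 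An equivalent reformulation, which may be technically cleaner, is to observe that a second common perpendicular would force the composition $\rho_b\circ\rho_{b'}$ of the reflections of Lemma~\ref{lem:common_perp} to admit a second fixed point in the arc $b^+$; one then shows via~(M) that $\rho_b\circ\rho_{b'}$ strictly contracts the natural cross-ratio metric on $b^+$, so at most one fixed point can exist there.

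The main obstacle is identifying the correct separating 4-tuple together with the correct grouping of cross-ratios, so that the Axiom~(M) inequality is not harmlessly absorbed by the equalities coming from harmonicity: several separating 4-tuples are available and each side-length admits multiple equivalent expressions, and careful bookkeeping is required to isolate one strict inequality against one algebraic identity. A secondary point is that the cyclic-order analysis has a mirror variant (with $a'$ on the opposite side of $a$), but this is handled by a symmetric relabeling, and the same argument then applies.
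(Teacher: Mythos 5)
Your configuration analysis (the rectangle $q_1,\dots,q_4$, the strong causal relations via Lemma~\ref{lem:common_perp}, the cyclic order) is correct, and your general strategy --- playing the four harmonicity identities against a strict Axiom~(M) inequality --- is indeed the one the paper follows. But the proposal has a genuine gap: the contradiction is never actually derived. You explicitly defer the decisive step (``identifying the correct separating 4-tuple together with the correct grouping of cross-ratios'') as the main obstacle, and neither of your candidates closes it; the second even fails outright: with your cyclic order $z,z',x,x',u',u,y',y$, the Axiom-(M) pairs of the 4-tuple $(z,x',z',x)$, namely $(z,x')$ and $(z',x)$, do not separate each other (the separating pairing among $z,z',x,x'$ is $(z,x)$ against $(z',x')$), so (M) says nothing about it. The missing idea in the paper's proof is to compare the two M\"obius-invariant side lengths of your rectangle, $l=|q_1q_2|$ along $\lh_a$ and $l'=|q_3q_4|$ along $\lh_{a'}$, by evaluating each of them \emph{twice}: once in the semi-metric with infinitely remote point $z$ and once with infinitely remote point $u$, the two ends of $b$. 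Based at $z$, formula~(\ref{eq:distance}) with the infinite factors cancelled gives $e^{l}=d_z(y,z')/d_z(x,z')$ and $e^{l'}=d_z(y',z')/d_z(x',z')$; the interval inclusions read off from your cyclic order give, via Corollary~\ref{cor:interval_monotone}, $d_z(y,z')\ge d_z(y',z')$ and $d_z(x,z')\le d_z(x',z')$, with strictness when $a\neq a'$, hence $l>l'$. The same computation based at $u$, with $u'$ in place of $z'$, gives $l<l'$, forcing $a=a'$. This double evaluation of one invariant quantity from the two ends of $b$ is exactly the ``correct grouping'' you were looking for; without it, or an equivalent explicit combination, no cross-ratio is ever forced to be simultaneously equal to $1$ and strictly greater than $1$.

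Your fallback reformulation via $\rho=\rho_b\circ\rho_{b'}$ has the same status. The fixed-point dictionary is correct --- it is how Lemma~\ref{lem:common_perp} produces existence, and a second common perpendicular does give a second fixed point in $b^+$ --- but the assertion that $\rho$ \emph{strictly} contracts a cross-ratio metric on $b^+$ is at least as hard as the uniqueness statement itself. In a general monotone M\"obius structure the reflections $\rho_b$ are defined only through harmonicity and need not preserve cross-ratios, so there is no a priori control of how $\rho$ acts on such a metric; monotonicity alone yields the weak order-theoretic facts you already used (as in the proof of Lemma~\ref{lem:common_perp}), and the strictness is precisely the content that must be established --- which the paper does by the two-base-point comparison described above, not by any contraction principle.
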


\begin{proof} Assume there are common perpendiculars
$a=(z,u)$, $a'=(z',u')\in\ay$
to
$b$, $b'$. 
By the first part of Lemma~\ref{lem:common_perp}, 
$a$
and 
$a'$
are in the strong causal relation.
Let
$b=(x,y)$, $b'=(x',y')$.
Using that the pairs
$a,a'$
and 
$b,b'$
are in the strong causal relation, we assume without loss of generality
that on 
$X_x$
we have the following order of points
$zz'yy'u'ux'$.
We denote by
$q_1=(b,a)$, $q_2=(b',a)$, 
$q_1'=(b,a')$, $q_2'=(b',a')$
respective harmonic pairs. Then
$q_1,q_2\in\rh_a$, $q_1',q_2'\in\rh_{a'}$,
and we have well defined distances
$l=|q_1q_2|$, $l'=|q_1'q_2'|$.
Computing them in a semi-metric of the M\"obius
structure with infinitely remote point
$x$,
we obtain
$$e^l=\frac{|zx'|}{|x'u|}\quad
  e^{l'}=\frac{|z'x'|}{|x'u'|}.$$
Using the order of points
$zz'yy'u'ux'$
on
$X_x$,
we have, in particular, that the interval
$z'x'$
is contained in the interval
$zx'$.
By Corollary~\ref{cor:interval_monotone},
$|zx'|\ge|z'x'|$.
Similarly,
$x'u\sub x'u'$
and hence
$|x'u|\le|x'u'|$.
Thus
$l\ge l'$
and if 
$a'\neq a$,
the inequality is strong. Applying this argument with infinitely remote point
$y$,
we obtain
$l\le l'$.
Therefore
$l=l'$
and
$a=a'$. 
\end{proof}

\subsection{Defining a pseudometric metric $\de$ on $\hm$ and $\harm$}
\label{subsect:def_pseudometric}

It follows from Lemma~\ref{lem:distance_additive}, that for harmonic pairs
$q$, $q'$
on one and the same left (right) line, the length of the segment
$\si=qq'$
is equal to the distance
$|qq'|$,
that is, it can be computed by any of Equalities~(\ref{eq:distance}), 
(\ref{eq:distance_different}).

Let
$S=\{\si_i\}$
be a zz-path in
$\harm$.
We define length of
$S$
as the sum
$|S|=\sum_i|\si_i|$
of the length of its sides. Now, we define a distance
$\de$
on
$\harm$
by
$$\de(q,q')=\inf_S|S|,$$
where the infimum is taken over all zz-paths
$S\sub\harm$
from
$q$
to
$q'$.

\begin{pro}\label{pro:sym_finite_dist} The distance
$\de$
on
$\harm$
is symmetric,
$\de(q,q')=\de(q',q)$, $\de(q,q)=0$,
satisfies the triangle inequality,
$$\de(q,q'')\le\de(q,q')+\de(q',q''),$$
and finite
$\de(q,q')<\infty$,
for all
$q,q',q''\in\harm$. 
\end{pro}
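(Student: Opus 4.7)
The plan is to verify the four properties — symmetry, vanishing on the diagonal, finiteness, and the triangle inequality — in roughly increasing order of difficulty, with only the last one requiring nontrivial work.

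For symmetry, given a zz-path $S=(\si_1,\dots,\si_n)$ from $q$ to $q'$, the reversed sequence $(\si_n^{-1},\dots,\si_1^{-1})$ is a zz-path from $q'$ to $q$ of the same total length, since reversing a segment preserves it as a left (resp.\ right) segment of the same length, and reversing the order of an alternating sequence is still alternating. Hence $\de(q,q')=\de(q',q)$. The equality $\de(q,q)=0$ is built into the definition, as a zz-path is allowed to be the empty sequence, which has length $0$. For finiteness, Lemma~\ref{lem:zz_connected} provides, for every $q,q'\in\harm$, a zz-path with at most five sides connecting them. Each side has finite length: choosing a semi-metric in $M$ whose infinitely remote point lies outside the finitely many points entering the side, formula~(\ref{eq:distance}) computes the length as the absolute value of the logarithm of a finite positive cross-ratio. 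Summing five such finite quantities yields $\de(q,q')<\infty$.

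The triangle inequality is the substantive part. Given zz-paths $S_1\colon q\to q'$ and $S_2\colon q'\to q''$, I would show that their concatenation can be turned into an honest zz-path $S\colon q\to q''$ with $|S|\le|S_1|+|S_2|$; taking infima over $S_1,S_2$ then gives the claim. The only obstruction to the concatenation being a zz-path is a possible alternation failure at the junction $q'$: the last side of $S_1$ and the first side of $S_2$ may be of the same type. Suppose they are both left segments meeting at $q'=(a,b)$. Since both have left axis $a$, both lie on the unique left line $\lh_a$ through $q'$, so they can be replaced by a single segment of $\lh_a$ joining their other endpoints. By Lemma~\ref{lem:distance_additive} applied to the three possible configurations — endpoints on opposite sides of $q'$ (new length is the sum), on the same side (new length is the positive difference), or coinciding (the two sides cancel entirely) — the total length does not increase. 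The merged side, when non-empty, is a left segment whose neighbors in the new sequence are right segments by alternation within $S_1$ and $S_2$, so alternation is locally restored. In the cancellation case, two further same-type sides may become adjacent, and one iterates; since the total number of sides strictly decreases with every merge, the process terminates.

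The main obstacle I anticipate is precisely this alternation bookkeeping: one needs to check all three subcases of the merging step and verify that iterated merging terminates in a genuine zz-path whose length is still bounded by $|S_1|+|S_2|$. Beyond this, only the basic additivity of lengths along a line (Lemma~\ref{lem:distance_additive}) and the five-side connectivity (Lemma~\ref{lem:zz_connected}) are used, so nothing deeper than what the section has already set up is required.
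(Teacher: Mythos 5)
Your proof is correct and takes essentially the same route as the paper, whose own proof simply declares symmetry and the triangle inequality immediate from the definition, uses the empty zz-path for $\de(q,q)=0$, and cites Lemma~\ref{lem:zz_connected} for finiteness. The only difference is that you explicitly justify the concatenation step for the triangle inequality by merging same-type adjacent sides (both necessarily on the unique left or right line through the junction) via Lemma~\ref{lem:distance_additive}, a piece of alternation bookkeeping the paper leaves implicit; your handling of it, including the cancellation and iteration cases, is sound.
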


\begin{proof} The property of
$\de$
to be symmetric and the triangle inequality immediately 
follows from the definition. Taking an empty zz-path, we see that
$\de(q,q)=0$
for any
$q\in\harm$.

The fact that the distance
$\de(q,q')$
is finite for every
$q$, $q'\in\harm$,
follows from Lemma~\ref{lem:zz_connected}.
\end{proof}

We similarly define the distance on
$\hm$,
for which we use the same notation
$\de$.
The canonical projection
$\harm\to\hm$
is a 2-sheeted covering of 3-manifolds with deck transformation group
isomorphic to
$\Z_2$
acting by 
$\de$-isometries. Then the distance on
$\harm$
is obtained by lifting the distance on
$\hm$.

A basic problem is to prove that 
$\de$
is nondegenerate, i.e.,
$\de(q,q')>0$
for any distinct
$q$, $q'\in\harm$.
It is not at all clear that this holds even in the case
$q$, $q'$
lie on a line, and moreover that
$\de(q,q')=|qq'|$
in this case.

\section{Projections to a line}
\label{sect:project_line}

\subsection{$s$-projection and midpoint projection}
\label{subsect:midpoint_project}

It follows from Lemma~\ref{lem:three_embeddings} that given
$a\in\ay$
and
$x\in X$, $x\notin a$,
there is a uniquely determined
$y\in X$
such that the pair
$(a,b)$
is harmonic, 
$(a,b)\in\hm$,
where
$b=(x,y)$.
In this case, we use notation
$x_a:=b$
and say that
$x_a\in\h_a$
is the projection of
$x$
to the line
$\h_a$.

We say that a one-parametric family of segments
$v_tw_t\sub\R$
is {\em monotone}, if its ends
$v_t$, $w_t$
are monotone in the same sence, i.e.,
$v_t<v_{t'}$
if and only if
$w_t<w_{t'}$
for
$t\neq t'$.

\begin{lem}\label{lem:monotone_segments} Given two lines
$\h_a$, $\h_c\sub\hm$
with
$a=(z,u)\in\ay$,
the family of segments
$v_aw_a=v_aw_a(p)\sub\h_a$
is monotone in
$p=(c,d)\in\h_c$, 
where
$d=(v,w)\in\ay$,
as
$p$
runs over the segment
$z_cu_c\sub\h_c$.
\end{lem}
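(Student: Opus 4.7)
The plan is to track the endpoints $v(p), w(p)$ of the axis $d(p) = (v,w)$ as $p$ varies along $z_cu_c\subset\h_c$, and to transfer their monotone motion on $X$ to monotone motion on $\h_a$ via the projection $x\mapsto x_a$. Since $(c, d(p))$ is harmonic, $c$ and $d(p)$ always separate each other, so I will label the two endpoints consistently: $v(p)$ on one open arc $A$ of $X\setminus c$, and $w(p)$ on the complementary arc $A'$. At the segment's ends, $d(z_c) = \{z, \rho_c(z)\}$ and $d(u_c) = \{u, \rho_c(u)\}$, where $\rho_c$ is the reflection through $c$ from the proof of Lemma~\ref{lem:common_perp}; note that $\rho_c$ exchanges $A$ and $A'$. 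By Lemma~\ref{lem:common_perp} applied to points of $\h_c$, the arcs of $X$ cut out by $d(p)$ nest monotonically as $p$ moves, so $v(p)$ moves monotonically along $A$ and $w(p)$ moves monotonically along $A'$.

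The next step is to verify that each of $v(p), w(p)$ stays in the closure of a single arc of $a=(z,u)$. This rests on a short case analysis of the configuration of $a$ relative to $c$: if $a, c$ separate each other, then each arc of $c$ contains exactly one endpoint of $a$; otherwise, one arc of $c$ contains both $z, u$ while the other avoids $a$ entirely. In either configuration, continuity of the motion together with the strong causal nesting prevents $v$ or $w$ from crossing an endpoint of $a$ in the interior of $z_cu_c$. I then invoke the monotonicity of the projection $x \mapsto x_a$ restricted to an arc of $a$, which follows from Corollary~\ref{cor:interval_monotone} together with Lemma~\ref{lem:distance_additive} and the distance formula~(\ref{eq:distance}): as $x$ moves monotonically along one arc of $a$ toward, say, $u$, the image $x_a$ moves monotonically along $\h_a\cong\R$ toward the $u$-end. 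Consequently, both $v_a(p)$ and $w_a(p)$ are monotone functions of $p$.

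Finally, to see that they are monotone in the same sense, I orient $\h_a$ from its $z$-end toward its $u$-end. In each case above, as $p$ runs from $z_c$ to $u_c$, both endpoints $v$ and $w$ sweep from the $z$-vicinity to the $u$-vicinity of $X\setminus a$, touching an endpoint of $a$ only at the boundary of $z_cu_c$; hence both $v_a$ and $w_a$ move toward the $u$-end of $\h_a$, giving monotonicity of the family $v_aw_a(p)$ in the same sense. The hard part will be the case analysis pinning down which arc of $a$ contains $v$ (respectively $w$) throughout the motion; once this is settled, the remaining monotonicity conclusions follow from the previously established lemmas combined with continuity.
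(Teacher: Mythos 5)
Your proposal is correct and follows essentially the same route as the paper: confine the endpoints $v(p)$, $w(p)$ to fixed arcs relative to $a$ (the paper does this via the arcs $zu\sm z'u'$ and $z'u'\sm zu$, where $z'=\rho_c(z)$, $u'=\rho_c(u)$), use the strong causal nesting of the axes $d(p)$ along $\h_c$ coming from Lemma~\ref{lem:common_perp} and the definition of a segment to get monotone motion of $v$ and $w$, and transfer this to $\h_a$ via the order-preserving projection $x\mapsto x_a$. The only differences are harmless: you omit the degenerate cases the paper dispatches first ($c=a$; the pair $(a,c)$ harmonic, where $z_cu_c$ is a single point; and the ray case when $a$ and $c$ share an end), while you make explicit the same-sense orientation check that the paper leaves implicit in the phrase ``taking projections on $a$.''
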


\begin{proof} If
$c=a$,
then there is nothing to prove because
$z_cu_c=\h_c$ 
in this case and
$v_a=p=w_a$
for any
$p\in\h_c$.
Thus we assume that 
$c\neq a$.

Another trivial case occurs when the pair
$(a,c)$
is harmonic. In that case,
$z_c=u_c$,
i.e.
the segment
$z_cu_c$
is degenerate, and for 
$p=z_c=u_c$,
the family
$v_aw_a(p)=h_a$
is constant. Thus we assume that the pair
$(a,c)$
is not harmonic.

Let
$z'=\rho_c(z)$, $u'=\rho_c(u)$,
where
$\rho_c:X\to X$
is the reflection with respect to
$c$
(see the proof of Lemma~\ref{lem:common_perp} and \cite{Bu17}).
Then by definition
$z_c=(z,z')$, $u_c=(u,u')$.
Note that
$z_cu_c\sub\h_c$
is a ray when
$a$
and
$c$
have a common end.

Let
$zu\sub X$
be an open arc determined by
$z$, $u$
that does not contain at least one of the ends of
$c$,
and let
$z'u'\sub X$
be the 
$\rho_c$
image of
$zu$.
Then the ends
$v$, $w\in X$
of
$d$
miss the intersection
$zu\cap z'u'$
(which is nonempty if and only if the pairs
$a$, $c$
separate each other). We assume without loss of generality that
$v\in zu\sm z'u'$
($zu\sm z'u'\neq\es$
by the assumption that 
$(a,c)$
is not harmonic). Then
$w\in z'u'\sm zu$
because
$w=\rho_c(v)$.

Under our assumption, an order on
$z_cu_c$
induces well defined orders on the arcs
$zu\sm z'u'$, $z'u'\sm zu$
such that
$p<p'$
if and only if
$v<v'$
and
$w<w'$
for
$p=(c,d)$, $p'=(c,d')$, $d=(v,w)$, $d'=(v',w')$.
Taking projections on
$a$,
we see that the family
$v_aw_a(p)\sub\h_a$
is monotone in
$p$.
\end{proof}

We say that
$b\in\R$
is the 
$s$-point
of an (oriented) segment
$vw\sub\R$, $s>0$,
if
$b\in vw$
and
$|vb|/|bw|=s$.
For example,
$1$-point
is the midpoint of a segment
$vw$.
In that case, the order of
$v$, $w$
on
$\R$
is not important.

\begin{lem}\label{lem:midpoint_project} Given
$s>0$, $q=(a,b)\in\harm$, $a=(z,u)\in\ay$,
and a line 
$\h_c\sub\hm$,
there is a unique
$p=(c,d)\in z_cu_c\sub\h_c$, $d=(v,w)\in\ay$,
such that 
$b\in\h_a$
is the 
$s$-point 
of the (maybe degenerate) segment
$v_aw_a\sub\h_a$.
\end{lem}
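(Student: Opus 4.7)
The plan is a one-parameter intermediate value argument on the segment $z_cu_c\sub\h_c$, using the monotonicity provided by Lemma~\ref{lem:monotone_segments} together with a boundary analysis at its endpoints.

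First I dispose of the two degenerate cases already identified in the proof of Lemma~\ref{lem:monotone_segments}. If $c=a$ then for every $p\in\h_c=\h_a$ the segment $v_aw_a$ collapses to the singleton $\{p\}$, whose $s$-point is $p$ itself, so the unique solution is $p=b$. If $(a,c)$ is harmonic with $c\neq a$, then $z_cu_c$ collapses to the single harmonic pair, which is forced to be $p$, and the accompanying convention that $v_aw_a$ fills $\h_a$ makes any $b$ trivially an $s$-point.

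In the remaining case (where $c\neq a$ and $(a,c)$ is not harmonic), define a map $\Phi:z_cu_c\to\h_a$ sending $p=(c,d)$ with $d=(v,w)$ to the $s$-point of the segment $v_aw_a\sub\h_a$. Identifying $\h_a$ isometrically with $\R$ via the distance~(\ref{eq:distance}), the $s$-point is the affine combination $(v_a+sw_a)/(1+s)$, which is continuous and strictly monotone in each of $v_a$, $w_a$. Combined with the strict, same-sense monotonicity of $p\mapsto v_a(p)$ and $p\mapsto w_a(p)$ supplied by Lemma~\ref{lem:monotone_segments} (strictness coming from the strict inequality in Axiom~(M)), this shows $\Phi$ is continuous and strictly monotone on $z_cu_c$. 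For the boundary behaviour, as $p\to z_c$ one entry of $d$ tends to $z$, an endpoint of the axis $a$, so its projection to $\h_a$ escapes to one end at infinity of $\h_a$, while the other entry tends to $\rho_c(z)\notin a$ and has finite projection; the affine formula then drags $\Phi(p)$ to that same end at infinity. Symmetrically $\Phi(p)$ tends to the opposite end as $p\to u_c$. Continuity and strict monotonicity upgrade $\Phi$ to a homeomorphism of $z_cu_c$ onto $\h_a$, giving a unique preimage of any prescribed $b\in\h_a$.

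The main obstacle is the boundary analysis: one must verify that the projection $x\mapsto x_a$ really does send a sequence $x_n\to z\in a$ to a sequence escaping to infinity in the $\R$-parametrization of $\h_a$, which reduces to unwinding the distance formula~(\ref{eq:distance}) near an endpoint of the axis, and to checking that in the generic case $\rho_c(z)\notin a$ so that the ``other'' projection stays finite and the affine combination is not balanced against a competing divergence.
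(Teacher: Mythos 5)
Your argument is, in substance, the paper's own proof: the paper likewise disposes of the two degenerate cases first, derives uniqueness from the monotonicity of the family $v_aw_a(p)$ supplied by Lemma~\ref{lem:monotone_segments} (its phrasing is that for $p\neq p'$ neither of the segments $v_aw_a$, $v_a'w_a'$ contains the other, hence the $s$-points differ --- the same content as your strict monotonicity of the affine combination $(v_a+sw_a)/(1+s)$), and obtains existence by an intermediate value argument after checking that the $s$-point escapes to the end $z$ (resp.\ $u$) of $\h_a$ as $p\to z_c$ (resp.\ $u_c$), because one of $v_a$, $w_a$ diverges while the other stays bounded. Packaging this as a homeomorphism $\Phi:z_cu_c\to\h_a$ is only cosmetic. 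Moreover, the worry you flag at the end about ``the generic case $\rho_c(z)\notin a$'' dissolves: in your main case $\rho_c(z)=u$ would make the pair $(a,c)$ harmonic, which you have excluded, and $\rho_c(z)=z$ would force $z$ to be an end of $c$.

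That last possibility, however, is exactly the one genuine gap. You implicitly assume that both projections $z_c$, $u_c$ exist, i.e.\ $z,u\notin c$. When $a$ and $c$ have a common end --- say $z\in c$ --- the projection $z_c$ is undefined, and the admissible set $z_cu_c$ is a ray in $\h_c$ (this is noted in the proof of Lemma~\ref{lem:monotone_segments}). At the infinite end of that ray your boundary analysis breaks down: as $p$ escapes along the ray, both entries $v$, $w$ of $d$ converge on $X$ to the common end $z$, so both $v_a$ and $w_a$ diverge to the same end of $\h_a$, and there is no ``one finite, one infinite'' dichotomy. The conclusion survives --- since $v_a$ and $w_a$ go to the same infinite end and the $s$-point lies between them, $\Phi(p)$ diverges to that end as well --- but this case must be treated separately; the paper does precisely this in the parenthetical remark of its proof. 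With that case added, your proof is complete and coincides with the paper's.
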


\begin{proof} If
$c=a$,
then
$z_cu_c=\h_c$,
and for every
$p=(c,d)\in\h_c$
the segment
$v_aw_a$
is degenerate,
$v_a=w_a=p$.
In this case, we take
$p=(c,b)$.
We also do not exclude 
the case when the segment
$z_cu_c$
is degenerate, i.e.,
$z_c=u_c$.
In this case, the pair
$(a,c)$
is harmonic,
$d=a$
and
$v_aw_a=\h_a$.
Therefore, any
$b\in\h_a$
is understood as the 
$s$-point 
of the 
$\h_a$
ends at infinity.

As in the proof of Lemma~\ref{lem:monotone_segments}, we always assume that
$v\in zu\sm z'u'$
for
$d=(v,w)$, 
where
$z'=\rho_c(z)$, $u'=\rho_c(u)$.
Then
$w\in z'u'\sm zu$.
and we consider
$v_aw_a\sub\h_a$
as an oriented segment. This is well defined because by 
Lemma~\ref{lem:monotone_segments} the family
$v_aw_a=v_aw_a(p)$
in monotone in
$p\in z_cu_c$.

First, we show that any 
$b\in\h_a$
separates the 
$s$-points
$m_a'$ 
and
$m_a''$
of segments
$v_a'w_a'$, $v_a''w_a''$
respectively for appropriate
$d'=(v',w')$, $d''=(v'',w'')\in z_cu_c\sub\h_c$.
To this end, note that the 
$s$-point
$m_a$
of the segment
$v_aw_a\sub\h_a$
approaches the 
$\h_a$
ends at infinity
$z$
or
$u$
as
$d=(v,w)\in z_cu_c$
approaches
$z_c$
or
$u_c$
respectively. Indeed, one of
$v_a$, $w_a$
stays bounded on
$\h_a$
while the other one goes along
$\h_a$
to infinity as
$d\to z_c$
or
$u_c$.
Thus 
$m_a$
goes to
$z$
or
$u$
respectively. (It may happen that one of
$z_c$, $u_c\in\h_c$
is at infinity but not both of them when
$a$, $c\in\ay$
have a common end. In that case, the admissible segment
$z_cu_c$
is a ray on
$\h_c$,
and both
$v_a$, $w_a$
together with their 
$s$-point
$m_a$
go to respective end at infinity of
$\h_a$
when
$d=(v,w)\in\h_c$
goes to the infinite end of the ray).

Second, we conclude that any
$b\in\h_a$
is the 
$s$-point 
of a respective segment
$v_aw_a\sub\h_a$, $b=m_a$.
By the first part,
$b$
lies between
$m_a'$, $m_b''$.
Now, we move from
$d'$
to
$d''$
along
$\h_c$,
i.e. consider
$d_t=(1-t)d'+td''\in\h_c$, $0\le t\le 1$,
$d_t=(v_t,w_t)$.
The 
$s$-point
$(m_t)_a$
of
$(v_t)_a(w_t)_a$
varies continuously from
$m_a'$
to
$m_a''$
as
$t$
goes from 0 to 1. Therefore, there is
$0<\tau<1$
such that 
$b=(m_\tau)_a$.

Finally, we show that the required
$p=(c,d)\in\h_c$
is unique. Indeed, by Lemma~\ref{lem:monotone_segments}, segments
$v_aw_a$,
where
$d=(v,w)$,
are monotone in
$p\in z_cu_c$.
Thus for any other
$p'=(c,d')\in z_cu_c$
no one of the segments
$v_aw_a$, $v_a'w_a'$
contains the other one. Hence, the respective 
$s$-points
$m_a\neq m_a'$.
\end{proof}

For every
$s>0$
and a line
$\h_c\sub\hm$
Lemma~\ref{lem:midpoint_project} determines a map 
$\pr_c^s:\harm\to\h_c$,
which is called the 
$s$-{\em projection}
to the line
$\h_c$.
In the case
$s=1$
we abbreviate
$\pr_c:=\pr_c^1$,
and the map
$\pr_c:\harm\to\h_c$
is called the {\em midpoint} projection to
$\h_c$.
The map
$\pr_c^s\circ j:\harm\to\h_c$
in general differs from
$\pr_c^s$,
thus in
$\hm$
we have two maybe different projections to the line
$\h_c$
depending on the choice of one of the entries of
$q=(a,b)\in\hm$.
However,
$\pr_c^s$
is well defined along any line
$\h_a\sub\hm$,
and hence along any zz-path.

\subsection{Equal ratio projection}
\label{subsect:equal_ratio_project}

\begin{lem}\label{lem:equal_ratio_project} Given
$q=(a,b)\in\harm$, $a=(x,y)$, $b=(z,u)\in\ay$,
and a line
$\h_c\sub\hm$,
there is a unique
$p=(c,d)\in x_cy_c\cap z_cu_c\sub\h_c$, $d=(v,w)$,
such that 
$a\in v_bw_b\sub\h_b$, $b\in v_aw_a\sub\h_a$ 
and
$$\frac{|v_ba|}{|aw_b|}=\frac{|v_ab|}{|bw_a|}.$$
\end{lem}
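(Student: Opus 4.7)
The plan is to parameterize points in $J := \intr(x_cy_c) \cap \intr(z_cu_c) \sub \h_c$ by two ratios $s, t$ coming from Lemma~\ref{lem:midpoint_project} and find the unique $p$ with $s(p) = t(p)$ by an intermediate value theorem argument. For each $p \in \intr(x_cy_c)$, Lemma~\ref{lem:midpoint_project} applied to $q = (a,b)$ (with axis $a$, whose endpoints are $x,y$ in our notation) yields a unique $s(p) > 0$ such that $b$ is the $s(p)$-point of $v_aw_a \sub \h_a$, and the map $s \colon \intr(x_cy_c) \to (0,\infty)$ is a continuous bijection. Symmetrically, applying Lemma~\ref{lem:midpoint_project} to $j(q) = (b,a)$, for $p \in \intr(z_cu_c)$ we get a unique $t(p) > 0$ with $a$ the $t(p)$-point of $v_bw_b \sub \h_b$, and $t$ is a continuous bijection from $\intr(z_cu_c)$ onto $(0,\infty)$. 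The $p$ sought by the lemma is then the unique element of $J$ with $s(p) = t(p)$.

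I would first verify that $J$ is a nonempty open subinterval of $\h_c$, which is the place where the harmonicity of $(a,b)$ enters essentially. The intuition (from the hyperbolic picture, where $a$ and $b$ correspond to perpendicular geodesics) is that the intersection of the axes projects to a common point of $\h_c$ lying in both $x_cy_c$ and $z_cu_c$. In our axiomatic setting, I would take a semi-metric $|\cdot\cdot|_\omega$ with $\omega$ an endpoint of $c$ so that $\h_c$ identifies with $\R$, and verify using Axiom~(M) together with the harmonic relation on $(a,b)$ that the two intervals $x_cy_c,\ z_cu_c \sub \R$ overlap. The endpoints of $J$ then lie among $\{x_c,y_c,z_c,u_c\}$.

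Now consider $F \colon J \to \R$ defined by $F(p) = \log s(p) - \log t(p)$. At each endpoint of $J$, exactly one of $s(p), t(p)$ tends to $0$ or $\infty$ (the one whose home segment is ending) while the other stays finite, so $F \to \pm\infty$ at each end. By Lemma~\ref{lem:monotone_segments}, both $s$ and $t$ are strictly monotone along $\h_c$; an orientation check, matching the direction of motion of $p$ on $\h_c$ with the orderings of $v_a,w_a$ on $\h_a$ and of $v_b,w_b$ on $\h_b$ via the cyclic order on $X$ and the harmonicity of $(a,b)$, shows that $\log s$ and $\log t$ are monotone in opposite senses. Hence $F$ is strictly monotone on $J$, must take the values $-\infty$ and $+\infty$ at its two ends, and by the intermediate value theorem has a unique zero $p \in J$, giving $s(p) = t(p)$ as required.

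The main obstacle is the orientation bookkeeping in two places: the non-emptiness of $J$, and the matching of the monotonicity senses of $s$ and $t$ along $\h_c$. Both rely essentially on the harmonicity of $(a,b)$ combined with Axiom~(M); once they are handled, the rest reduces to continuity, monotonicity, and IVT.
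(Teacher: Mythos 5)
Your overall route --- restrict to the overlap $J=\intr(x_cy_c)\cap\intr(z_cu_c)$, view the two ratios as functions of $p$, get existence from endpoint limits plus the intermediate value theorem, and uniqueness from monotonicity via Lemma~\ref{lem:monotone_segments} --- is exactly the skeleton of the paper's proof. But two of your supporting claims are wrong as stated. First, the domains: the ratio in which $b$ is an $s$-point of $v_aw_a\sub\h_a$ is defined precisely when $b\in\intr(v_aw_a)$, and since the projections of the entries of $b=(z,u)$ to $\h_a$ satisfy $z_a=u_a=b$ (harmonicity of $(a,b)$), this holds if and only if $d=(v,w)$ separates $(z,u)$, i.e.\ if and only if $p\in\intr(z_cu_c)$ --- not on all of $\intr(x_cy_c)$ as you assert; symmetrically, the ratio on $\h_b$ is defined iff $p\in\intr(x_cy_c)$. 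So each ratio's ``home segment'' is the projection segment of the \emph{opposite} axis (you have them swapped), and neither map is a bijection from its full segment: both ratios are defined, and are bijections onto $(0,\infty)$, only on $J$. Lemma~\ref{lem:midpoint_project} does not give what you use: it produces, for each $s$, a unique $p$, and that $p$ automatically lands in $J$; the inverse $p\mapsto s(p)$ exists only there. Second, the endpoint behavior: it is false that at each end of $J$ ``exactly one of $s(p),t(p)$ tends to $0$ or $\infty$ while the other stays finite.'' As $p\to x_c$ one has $v\to x$, which simultaneously sends the foot $v_a$ to the ideal end $x$ of $\h_a$ (so the $\h_a$-ratio tends to $\infty$) and the foot $v_b$ to the point $a\in\h_b$ (the projection of $x$ to $\h_b$ is exactly $a$, again by harmonicity of $(a,b)$), so the $\h_b$-ratio tends to $0$: both ratios degenerate at once, in opposite senses. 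This is precisely the paper's computation $(s,t)\to(0,\infty)$ as $p\to x_c$ and $(s,t)\to(\infty,0)$ at the other end of $J$.

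These errors are repairable without changing your architecture: with the corrected domains and limits, your $F=\log s-\log t$ still tends to $\mp\infty$ at the two ends of $J$, and monotonicity of the families $v_aw_a$, $v_bw_b$ from Lemma~\ref{lem:monotone_segments}, applied to segments containing the fixed interior points $b$ resp.\ $a$, makes the ratios strictly monotone in opposite senses, giving the unique zero --- at which point your argument coincides with the paper's. Two smaller omissions to fix in a full write-up: nonemptiness and nondegeneracy of $J$ should be justified as in the paper, namely the harmonic pairs $(x,y)$, $(z,u)$ separate each other (by monotonicity of $M$), hence their projections to $\h_c$ interleave; and you skip the degenerate cases the paper disposes of first --- when $(a,c)$ or $(c,b)$ is harmonic one of the segments collapses to a point and the asserted equality is read as $0/0=\infty/\infty$, and when $c$ shares an end with $a$ or $b$ the relevant segment becomes a ray.
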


\begin{proof} We fix an orientation of the line
$\h_c$
and the respective order. Note that one of the segments
$x_cy_c$, $z_cu_c$
is degenerate if and only if one of the pairs
$(a,c)$
or
$(c,b)$
is harmonic. Then there is nothing to prove because
$p=(c,a)$, $d=(v,w)=(x,y)$, $a=v_b=w_b\in\h_b$, $b\in v_aw_a=\h_a$ 
in the first case, and
$p=(c,b)$, $d=(v,w)=(z,u)$, $a\in v_bw_b=\h_b$, $b=v_a=w_a\in\h_a$,
in the second case (and the required equality is understood as
$0/0=\infty/\infty$).

Thus we assume that none of the segments
$x_cy_c$, $z_cu_c$
is degenerate. Moreover, their intersection
$x_cy_c\cap z_cu_c$
is not empty and also a nondegenerate segment because the pairs
$(x,y)$
and
$(z,u)$
being harmonic separate each other. Without loss of generality, we assume that
$x_c<y_c$, $u_c<z_c$.
Then
$u_c<y_c$
because the pairs
$(x,y)$
and
$(z,u)$
separate each other on
$X$.
Hence every
$d\in x_cy_c\cap z_cu_c$
separates
$u_c$, $y_c$
and
$x_c$, $z_c$.

Furthermore,
$c$
cannot separate
$(x,y)$, $(z,u)$.
Thus we can assume without loss of generality that
$c$
does not separate
$x$, $z$.
We also assume that
$v$
lies on the same arc in
$X$
determined by
$c$
as
$x$
and
$z$.
Then the assumption
$d=(v,w)\in x_cy_c\cap z_cu_c$
implies that 
$v$
lies between
$x$
and
$z$
on that arc.

It follows that when we are moving along
$\h_a$
from
$x$
to
$y$,
we meet
$v$ 
earlier than
$z$,
and
$u$
earlier than
$w$.
Hence,
$b\in v_aw_a$.
Similarly, when we are moving along
$\h_b$
from
$z$
to
$u$,
we meet
$v$
earlier than
$x$,
and
$y$
earlier than
$w$.
Hence
$a\in v_bw_b$.

To be definite we assume that
$x_cy_c\cap z_cu_c=x_cz_c$
(other cases are considered similarly). Thus if
$p=(c,d)\in x_cz_c$
goes to
$x_c$,
then
$v_a\to\infty$,
while
$w_a$
stays bounded on
$\h_a$, 
and
$v_b\to a$,
while
$\lim w_b\neq a$.
Setting
$s=|v_ba|/|aw_b|$, $t=|v_ab|/|bw_a|$,
we see that
$(s,t)\to (0,\infty)$
as
$p\to x_c$.
 
Similarly, if
$p\to z_c$,
then
$v_a\to b$,
while
$\lim w_a\neq b$, 
and
$v_b\to\infty$,
while
$w_b$
stays bounded. Therefore,
$(s,t)\to (\infty,0)$
in this case. By continuity, there is
$p\in x_cz_c$
with
$s=t$.
This gives a required
$p\in x_cy_c\cap z_cu_c\sub \h_c$.

By Lemma~\ref{lem:monotone_segments}, segments
$v_aw_a$, $v_bw_b$
are monotone in
$p\in x_cy_c\cap z_cu_c\sub\h_c$.
Since
$b\in v_aw_a$, $a\in v_bw_b$,
this implies that the rations
$s$, $t$
are monotone. Thus a required
$p$
is unique.
\end{proof}

For a line
$\h_c\sub\hm$,
Lemma~\ref{lem:equal_ratio_project} determines a map 
$\prr_c:\harm\to\h_c$,
which is called the {\em equal ratio projection} to the line
$\h_c$. 
Note that for 
$q=(a,b)\in\harm$
we have
$\prr_c(q)=\pr_c^s(q)$
for some well determined 
$s>0$,
where
$s$
depends on
$q$.

\subsection{Strictly contracting property of the midpoint projection}
\label{subsect:strictly_contacting}

Increment axiom (I) is only used in the proof of the following proposition
which plays a key role in the paper.

\begin{pro}\label{pro:strict_monotone} Given lines
$\h_a$, $\h_c\sub\hm$, $a\neq c$,
and points
$d=(v,w)$, $d'=(v',w')\in\h_c$
such that the pairs
$(v,w')$, $(v',w)$
separate each other, we have
$$\frac{1}{2}\left(|v_av_a'|+|w_aw_a'|\right)>|dd'|.$$
\end{pro}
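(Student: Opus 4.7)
The plan is to reduce the inequality to a single application of the Increment axiom (I). First, using (\ref{eq:distance}), rewrite each of the three quantities as the logarithm of a cross-ratio involving four explicit points of $X$: with $a=(x,y)$,
$$|v_av_a'|=\Bigl|\ln\tfrac{|xv'|\,|yv|}{|xv|\,|yv'|}\Bigr|,\qquad |w_aw_a'|=\Bigl|\ln\tfrac{|xw'|\,|yw|}{|xw|\,|yw'|}\Bigr|,$$
and $|dd'|$ analogously in terms of the endpoints of $c$ and of $v,v',w,w'$ — the harmonicities of $(c,d)$ and $(c,d')$ (equations (\ref{eq:harmonic})) make the two natural forms of $|dd'|$ coincide. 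The separation hypothesis on $(v,w')$ and $(v',w)$ pins down the cyclic order of $v,v',w,w'$ on $S^1$, which in turn fixes the signs of all three logarithms so that the absolute values can be dropped consistently.

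Next, assemble a 7-tuple $(x_1,\ldots,x_7)\in\reg\cP_7$ in the cyclic order $1234567$ required by (I), with six of the seven entries being $v,w,v',w'$ together with the endpoints $x,y$ of $a$; the seventh is an auxiliary point, most naturally the midpoint of the segment $dd'\sub\h_c$ (a harmonic pair, halfway between $d$ and $d'$ along $\h_c$, produced by Lemma~\ref{lem:three_embeddings} applied to $c$). With the correct labelling, the harmonicity of $(c,d)$ becomes the harmonicity of $q_{247}$ and the harmonicity of $(c,d')$ becomes the harmonicity of $q_{157}$ — exactly the hypotheses of (I). It is the separation hypothesis that makes such a cyclic labelling internally consistent.

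Applying (I) yields the strict cross-ratio inequality $\crr_1(q_{345})>\crr_1(q_{123})$. Expanding both sides via (\ref{eq:distance}) and using the two harmonicities already built into the configuration to cancel common factors, this reduces to
$$e^{|v_av_a'|+|w_aw_a'|}>e^{2|dd'|},$$
which, after taking logarithms and dividing by two, is exactly the proposition.

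The main obstacle is the combinatorial bookkeeping: choosing the labelling of the seven points so that (I)'s two harmonic hypotheses coincide with the harmonicities of $(c,d)$ and $(c,d')$ at our disposal, and so that the conclusion of (I), translated via (\ref{eq:distance}) and the same harmonicities, produces $|v_av_a'|+|w_aw_a'|$ on one side and $2|dd'|$ on the other. The choice of the seventh auxiliary point as the midpoint of $dd'$ on $\h_c$, together with the separation hypothesis dictating the cyclic positions of $v,v',w,w'$, is what makes the bookkeeping close up.
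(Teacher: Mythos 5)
First, note that this paper does not actually prove Proposition~\ref{pro:strict_monotone}: its entire ``proof'' is the citation to \cite[Proposition~7.11]{Bu17}. So your attempt can only be judged against Axiom~(I) itself, and there it has a structural gap that no amount of bookkeeping can close. In Axiom~(I) the two harmonic 4-tuples are $q_{247}=(x_1,x_3,x_5,x_6)$ and $q_{157}=(x_2,x_3,x_4,x_6)$, which share exactly the two entries $x_3,x_6$. The harmonicity of $(c,d)$ is a relation among the four points $p,q,v,w$, where $c=(p,q)$, and that of $(c,d')$ is a relation among $p,q,v',w'$; these two 4-point sets share exactly $p,q$. Hence, to realize the two hypotheses of (I) as the harmonicities of $(c,d)$ and $(c,d')$, you are forced to take $\{x_3,x_6\}=\{p,q\}$, $\{x_1,x_5\}=\{v,w\}$, $\{x_2,x_4\}=\{v',w'\}$ --- the endpoints of $c$ \emph{must} be among the seven points. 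This contradicts your choice of the seven entries as $v,w,v',w',x,y$ plus a midpoint: a 4-tuple drawn from points not containing $p$ and $q$ simply cannot express the harmonicity of $(c,d)$, and your auxiliary midpoint (which, incidentally, is a harmonic \emph{pair} $(c,e)\in\harm$, not a single point of $X$) appears nowhere in (I)'s hypotheses, so it repairs nothing.

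With the forced assignment, only the single slot $x_7$ remains free, so at most one of the two endpoints $x,y$ of $a$ can enter the 7-tuple. Consequently one application of (I) yields $\crr_1(q_{345})>\crr_1(q_{123})$ with $\crr_1(q_{345})=\frac{|x_1x_6||x_2x_7|}{|x_1x_7||x_2x_6|}$ and $\crr_1(q_{123})=\frac{|x_4x_6||x_5x_7|}{|x_4x_7||x_5x_6|}$ --- an inequality involving only the six points $v,v',w,w'$, one endpoint of $c$, and one free point. The target inequality $|v_av_a'|+|w_aw_a'|>2|dd'|$, expanded via (\ref{eq:distance}) and (\ref{eq:distance_different}), involves \emph{eight} points: $x,y,p,q,v,v',w,w'$. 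So a single application of (I) can never produce it, contrary to your central claim; at minimum one needs two applications (e.g.\ one with $x_7=x$ and one with $x_7=y$, the two endpoints of $c$ entering through the shared slots $x_3,x_6$ in the two runs) combined multiplicatively, together with the monotonicity/order analysis fixing all signs --- which is in the spirit of what \cite[Proposition~7.11]{Bu17} actually does. What is sound in your proposal: rewriting all three quantities as logarithms of cross-ratios via (\ref{eq:distance}), using the harmonicities to pass to the mixed forms of $|dd'|$ in (\ref{eq:distance_different}), and the instinct that (I)'s two harmonic hypotheses should be the harmonicities of $(c,d)$ and $(c,d')$. But the 7-point assignment you describe is internally inconsistent, and with it the ``bookkeeping'' cannot close up.
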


For its proof see \cite[Proposition~7.11]{Bu17}.

\begin{lem}\label{lem:midpoint_project_estimate} The midpoint projection
$\pr_c:\h_a\to\h_c$
to any line
$\h_c\sub\hm$
is strictly contracting along any line
$\h_a\sub\hm$, $a\neq c$.
\end{lem}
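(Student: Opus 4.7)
The plan is to reduce the claim to a direct application of Proposition~\ref{pro:strict_monotone}. Fix distinct $q=(a,b), q'=(a,b')\in\h_a$ and set $p=\pr_c(q)=(c,d)$, $p'=\pr_c(q')=(c,d')\in\h_c$ with $d=(v,w), d'=(v',w')\in\ay$. If $(a,c)$ is harmonic then $z_cu_c$ is a single point and $\pr_c|_{\h_a}$ is constant, so the inequality is trivial; otherwise Lemma~\ref{lem:midpoint_project} gives $p,p'\in z_cu_c$, and $p=p'$ would force $v_a=v_a'$, $w_a=w_a'$, hence $b=b'$ (both being midpoints of the same segment), contradicting $q\ne q'$.

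The key identity is
$$|qq'|=\tfrac{1}{2}\bigl(|v_av_a'|+|w_aw_a'|\bigr).$$
To prove it, identify $\h_a$ with $\R$ isometrically and let $V,V',W,W',B,B'$ denote the coordinates of $v_a,v_a',w_a,w_a',b,b'$. The definition of the midpoint projection gives $B=(V+W)/2$ and $B'=(V'+W')/2$. By Lemma~\ref{lem:monotone_segments} the ends $v_a, w_a\in\h_a$ vary monotonically in the same direction on $\h_a$ as $d$ runs over $z_cu_c$; hence after labeling the unordered pairs $d, d'$ compatibly with this monotone pairing we have $V<V'$ and $W<W'$, and adding the two midpoint identities yields the claim.

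To conclude I would invoke Proposition~\ref{pro:strict_monotone} with the points $d, d'\in\h_c$, which requires the pairs $(v,w')$ and $(v',w)$ to separate each other on $S^1$. Since $d\ne d'$ both lie on the line $\h_c$, Lemma~\ref{lem:common_perp} puts them in strong causal relation, so $(v',w')$ sits on a single open arc determined by $(v,w)$. Combined with the fact that each of $d, d'$ separates $c$ on $S^1$, the cyclic order on $S^1$ is forced to be $v, v', w', w$, which both makes $(v,w')$, $(v',w)$ separate and is compatible with the monotone labeling chosen above. Proposition~\ref{pro:strict_monotone} then gives $\tfrac{1}{2}(|v_av_a'|+|w_aw_a'|)>|dd'|=|pp'|$, and combined with the identity this yields $|qq'|>|pp'|$.

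The main obstacle is the bookkeeping in the last step: one must check that a single labeling of the unordered pairs $d=(v,w), d'=(v',w')$ simultaneously realises the monotone pairing of Lemma~\ref{lem:monotone_segments} (so the midpoint identity cleanly collapses to the average) and produces the cyclic order $v,v',w',w$ needed to apply Proposition~\ref{pro:strict_monotone}. This is a cyclic-order check on $S^1$ that hinges on strong causal relation of $d, d'$ together with each of them separating $c$; everything else is immediate from the definition of $\pr_c$ and the strict-contraction input of Proposition~\ref{pro:strict_monotone}.
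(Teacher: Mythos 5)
Your proposal is correct and matches the paper's own proof in all essentials: both reduce the claim to Proposition~\ref{pro:strict_monotone} applied to $d,d'\in z_cu_c$, establish the separation of $(v,w')$, $(v',w)$ from the harmonicity of $(c,d)$, $(c,d')$ together with the strong causal relation from Lemma~\ref{lem:common_perp}, and collapse the midpoint identity to $|qq'|=\tfrac{1}{2}\bigl(|v_av_a'|+|w_aw_a'|\bigr)$ using that $v_a,w_a$ move in the same direction (the paper derives this same-sign property from $a$ separating $(v,v')$ and $(w,w')$, while you cite the monotonicity of Lemma~\ref{lem:monotone_segments} directly --- the same content). Your explicit treatment of the degenerate case where $(a,c)$ is harmonic and of the cyclic-order bookkeeping is slightly more careful than the paper's ``without loss of generality,'' but it is not a different argument.
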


\begin{proof} Given
$q=(a,b)$, $q'=(a,b')\in\h_a$,
we let
$p=\pr_c(q)$, $p'=\pr_c(q')$
be the midpoint projections to
$\h_c$.
Then
$p=(c,d)$, $p'=(c,d')$
with
$d=(v,w)$, $d'=(v',w')\in\ay$,
so that
$c$
and
$(v,w)$
separate each other as well as
$c$
and
$(v',w')$.
We assume without loss of generality that
$v$, $v'$
lie on an arc in
$X$
determined by
$c$,
while
$w$, $w'$
lie on the other arc determined by
$c$. 
Then the pairs
$(v,w')$, $(v',w)$
separate each other.

By Proposition~\ref{pro:strict_monotone}
$$\frac{1}{2}(|v_av_a'|+|w_aw_a'|)>|dd'|.$$

By definition of the midpoint projection,
$d$, $d'\in z_cu_c$,
where
$a=(z,u)$.
Thus
$a$
separates
$(v,v')$
and
$(w,w')$
(in terms of \cite{Bu17}, it means that the event
$(z,u)\in\ay$
is strictly between events 
$(v,v')$, $(w,w')\in\ay$).
By Lemma~\ref{lem:common_perp}, the pairs
$d=(v,w)$
and
$d'=(v',w')\in\h_c$
are in the strong causal relation. Since
$(z,u)$
separates 
$(v,v')$
and
$(w,w')$,
it follows that moving along
$a$,
we meet
$v,v'$
and
$w,w'$
in the same order. Identifying the line
$\h_a$
with real line
$\R$,
it means that the signs of
$v_a-v_a'$
and
$w_a-w_a'$
coincide.

Since
$b$
is the midpoint of
$v_aw_a$
and
$b'$
is the midpoint of
$v_a'w_a'$,
we obtain
\begin{align*}
|bb'|&=\left|\frac{1}{2}(v_a+w_a)-\frac{1}{2}(v_a'+w_a')\right|\\
     &=\frac{1}{2}|v_a-v_a'+w_a-w_a'|\\
     &=\frac{1}{2}(|v_av_a'|+|w_aw_a'|).
\end{align*}
Hence,
$|bb'|>|dd'|$
and therefore
$|qq'|>|pp'|$.
\end{proof}

\section{Distance $\de$ along segments}
\label{sect:distance_segments}

We assume that a monotone M\"obius structure
$M$
satisfies Increment axiom~(I), and under this assumption we prove
Theorem~\ref{thm:main}.

\begin{pro}\label{pro:sides_short} For any side 
$\si$
of a closed zz-path
$S\sub\harm$
we have 
$$|\si|<\sum_{\si'}|\si'|,$$
where the sum is taken over all sides
$\si'$
of
$S$, $\si'\neq\si$.
\end{pro}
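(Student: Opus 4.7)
Let the closed zz-path be $S = \si_0 \si_1 \cdots \si_{n-1}$ with consecutive vertices $q_0,q_1,\dots,q_{n-1},q_n=q_0$, where $\si_i$ has endpoints $q_i,q_{i+1}$, and WLOG $\si = \si_0$. The side $\si_0$ lies on some line $\h_c\sub\hm$, where $c$ is the common axis of $q_0$ and $q_1$. The strategy is to push the whole closed path to the line $\h_c$ via the midpoint projection $\pr_c$ and apply the triangle inequality on $\h_c\cong\R$, exploiting the strict contraction from Lemma~\ref{lem:midpoint_project_estimate} to upgrade weak to strict inequality.

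Concretely, set $p_i = \pr_c(q_i) \in \h_c$. Since $q_0,q_1\in\h_c$ already have axis $c$, the construction in Lemma~\ref{lem:midpoint_project} returns $p_0=q_0$ and $p_1=q_1$, so $|p_0 p_1| = |\si|$. For each remaining side $\si_i$, $i\ge 1$, the side lies on a line $\h_{a_i}$ for some $a_i\in\ay$. Since $\pr_c$ is well-defined along zz-paths, we can apply Lemma~\ref{lem:midpoint_project_estimate} on each such side: if $a_i\neq c$, then $\pr_c$ strictly contracts $\h_{a_i}$, giving $|p_i p_{i+1}| < |\si_i|$, while if $a_i = c$, the projection is the identity on $\h_c$ and $|p_i p_{i+1}| = |\si_i|$. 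Thus in all cases $|p_i p_{i+1}| \le |\si_i|$.

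The key observation is that the side $\si_1$ adjacent to $\si_0$ cannot lie on $\h_c$. Indeed, since zz-paths alternate left and right segments and the common endpoint $q_1$ of $\si_0,\si_1$ has two distinct axes (they are the axes of the two lines meeting at $q_1$), the axis $a_1$ of the line $\h_{a_1}$ carrying $\si_1$ satisfies $a_1\neq c$. Nondegeneracy of $\si_1$ then forces $q_1\neq q_2$, and strict contraction yields $|p_1 p_2| < |\si_1|$. Combining this with the triangle inequality on $\h_c\cong\R$ for the closed polygonal path $p_0,p_1,\dots,p_{n-1},p_0$,
\[
|\si| = |p_0 p_1| \le \sum_{i=1}^{n-1}|p_i p_{i+1}| < |\si_1| + \sum_{i=2}^{n-1}|\si_i| = \sum_{\si'\neq\si}|\si'|,
\]
which is the desired inequality. (The degenerate cases $n\le 2$ force adjacent sides to have coinciding endpoints and hence zero length, so there is nothing to prove.)

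\textbf{Expected obstacle.} The weak contraction step is immediate, but obtaining \emph{strict} inequality requires precisely the argument that some side adjacent to $\si_0$ lives on a different line; verifying this cleanly, and checking that the proof does not break down when several non-adjacent sides happen to lie on $\h_c$, is the main bookkeeping point. Of course, the real substance has already been absorbed into Lemma~\ref{lem:midpoint_project_estimate}, which in turn rests on Proposition~\ref{pro:strict_monotone} and Axiom~(I); all of the Möbius-geometric difficulty is hidden there.
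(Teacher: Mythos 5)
Your overall strategy coincides with the paper's (project the rest of the closed path onto the line $\h_c$ carrying $\si$ via the midpoint projection, use the strict contraction of Lemma~\ref{lem:midpoint_project_estimate}, and close up with the triangle inequality on $\h_c\cong\R$), but there is a genuine gap at the step where you set $p_i=\pr_c(q_i)$ and treat $p_0,p_1,\dots,p_n=p_0$ as a closed polygonal chain. The midpoint projection is \emph{not} a single-valued map on points of $\harm$: for $q=(a,b)$ it depends on which entry is used as the axis, i.e.\ $\pr_c^s\circ j$ differs in general from $\pr_c^s$ (see the end of sect.~\ref{subsect:midpoint_project}). A vertex $q_i$ of the zz-path lies on two sides carried by lines with \emph{different} axes, and projecting $q_i$ via the axis of $\si_{i-1}$ generally gives a different point of $\h_c$ than projecting it via the axis of $\si_i$. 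So the projected chain has jumps at every interior vertex and need not close up; the inequality $|p_0p_1|\le\sum_{i\ge1}|p_ip_{i+1}|$ has no justification. The paper's remark that $\pr_c^s$ ``is well defined along any zz-path'' means only that on each side the axis is unambiguous, not that the values from adjacent sides agree at the common vertex --- the paper explicitly flags this disagreement as the main problem (``this could create gaps on $\si$''), noting it disappears only for the canonical structure $M_0$. So your argument proves the statement for $M_0$ but not in the stated generality.

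The missing content is exactly what occupies most of the paper's proof: one replaces the discontinuous vertex-by-vertex projection by a \emph{continuous} map $\pr_\si:S'\to\h_c$, using the midpoint projection away from small $\ep_v$-neighborhoods of the vertices, the equal ratio projection $\prr_c$ of Lemma~\ref{lem:equal_ratio_project} (which by construction is axis-symmetric, hence unambiguous) at the vertices themselves, and an interpolation through the $s$-projections $\pr_c^s$ in between. Continuity forces the image to cover $\si$, the midpoint part is strictly contracted by Lemma~\ref{lem:midpoint_project_estimate}, and the interpolation part contributes an error $B$ that is made smaller than $|\si_1|+|\si_2|$, the lengths of the two sides adjacent to $\si$ --- which is affordable because those two sides project entirely to the endpoints of $\si$. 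Note also that your source of strictness (the side $\si_1$ adjacent to $\si$ lying on a different line) is vacuous in the correct setup, since $\si_1$ projects to a single point; strictness instead comes from the contraction on the non-adjacent sides together with the absorption of the $\ep$-error. Your ``expected obstacle'' paragraph thus identifies the wrong bookkeeping point: the issue is not sides accidentally lying on $\h_c$, but the axis-dependence of the projection at vertices.
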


\begin{proof} Let
$S'$
be a zz-path which is the union of all segments of
$S$
excluding
$\si$,
that is,
$S=\si\cup S'$.
The idea is to use the midpoint projection of
$S'$
to the line 
$h_c$
determined by
$\si$, $\si\sub h_c$,
and apply Lemma~\ref{lem:midpoint_project_estimate}. The problem is
that the midpoint projections on adjacent segments of
$S'$
may not coincide on the common vertex (in the case of the canonical
M\"obius structure on
$S^1$
they coincide). This could create gaps on
$\si$
which do not covered by the projection and thus prevent the required estimate. 
 
Let
$V=V(S')$
be the vertex set 
$S'$.
We fix 
$\ep>0$
and for every vertex
$v\in V$
we take
$\ep_v>0$
such that
$\sum_{v\in V}\ep_v<\ep$.
We use the midpoint projection on
$S'$
outside of the 
$\ep_v$-neighborhoods
$U_v(\ep_v)$, $v\in V$,
of vertices, the equal ratio projection on the vertices,
and interpolate between these types of projections inside of
$U_v(\ep_v)$
to obtain a continuous projection
$\pr_\si:S'\to\h_c$
with controlled metric properties.

Let
$\si'\sub S'$
be a side of
$S$
different from
$\si$, $h_a\sub \hm$
the line containing
$\si'$, $\si'=pq\sub h_a$.
If
$\si'$
is adjacent to
$\si$, 
then we assume to be definite that
$p\in\hm$
is the common vertex of
$\si$, $\si'$,
in particular,
$p=(a,c)$.
Then
$q=(a,c')$
for some
$c'\in\ay$.
In this case, by Lemmas~\ref{lem:midpoint_project}, \ref{lem:equal_ratio_project},
the whole segment
$\si'$
is projected to
$p$,
$\pr_\si(\si')=p$.

Thus we assume that
$\si'$
is not adjacent to
$\si$.
Let
$pq'\sub\si'$
be the minimal subsegment containing the 
$\ep_p$-neighborhood of
$p$
in
$\si'$, $|pq'|=\ep_p$.
We define
$\pr_\si$
on
$pq'$
by taking
$\pr_\si(p)=\prr_c(p)$, $\pr_\si(q')=\pr_c(q')$
and
$\pr_\si(p_\tau)=\pr_c^s(p_\tau)$,
where
$p_\tau=(1-\tau)p+\tau q'$,
$s=s(\tau)=(1-\tau)s_p+\tau$,
and
$s_p>0$
is determined by
$\prr_c(p)$, $\prr_c(p)=\pr_c^{s_p}(p)$
(we take
$s_p=1$ 
if the adjacent to
$\si'$
at
$p$
segment
$\si''\in S'$
is adjacent to
$\si$).

We have constructed a projection
$\pr_\si:S'\to\h_c$
of the zz-path
$S'$
to the line 
$\h_c$.
Continuity of
$\pr_\si$
along sides of
$S'$
follows from the uniqueness property of
$\pr_c^s$,
continuity at common vertices of adjacent sides
follows from definition of
$\prr_c$.
Since 
$\pr_\si$
is constant on the sides 
$\si_1$, $\si_2$
adjacent to
$\si$,
which are mapped to the vertices of
$\si$,
the continuity of
$\pr_\si$
implies that the image
$\pr_\si(S')\sub\h_a$
covers
$\si$.
Thus
$|\si|\le\sum_{\si'\neq\si}|\pr_\si(\si')|$. 

We decompose the right hand side of this inequality as
$\sum_{\si'\neq\si}|\pr_\si(\si')|=A+B$,
where
$A$
is the length of
$\pr_\si(S'\sm\cup_{v\in V}U_v(\ep_v))$
and
$B$
the length of
$\pr_\si(\cup_{v\in V}U_v(\ep_v))$.
Since
$\pr_\si$
coincides with the midpoint projection
$\pr_c$
on
$S'$
outside of the union
$\cup_{v\in V}U_v(\ep_v)$,
Lemma~\ref{lem:midpoint_project_estimate} gives
$A<|S''|-\sum_v\ep_v<|S''|$,
where
$S''=S'\sm(\si_1\cup\si_2)$.

Since
$\pr_\si$
is continuous on
$S'$,
we can make 
$B$
arbitrarily small taking
$\ep$
sufficiently small, say
$B<\de(\ep)<|\si_1|+|\si_2|$.
Thus
$|\si|\le|S''|+\de(\ep)<|S''|+|\si_1|+|\si_2|=|S'|$.
\end{proof}

\begin{cor}\label{cor:distance_line} For any
$q$, $q'\in\harm$
on a line we have
$\de(q,q')=|qq'|$.
\end{cor}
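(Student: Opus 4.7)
The direction $\de(q,q')\le|qq'|$ is immediate: the segment $qq'\subset h$ is itself a one-side zz-path between $q$ and $q'$ of length $|qq'|$, so the infimum defining $\de$ is at most $|qq'|$.

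For the reverse inequality, the plan is to apply Proposition \ref{pro:sides_short} with $\si = qq'$ playing the role of the distinguished side. Given an arbitrary zz-path $S$ from $q$ to $q'$, I would close it into a loop by adjoining the segment $qq'\subset h$ traversed from $q'$ back to $q$, producing a closed zz-path $\bar S$ that contains $qq'$ among its sides. The proposition, applied to $\bar S$ with $\si = qq'$, immediately yields
\[
|qq'|<\sum_{\si'\ne qq'}|\si'|=|S|.
\]
Taking the infimum over all zz-paths $S$ from $q$ to $q'$ gives $|qq'|\le\de(q,q')$, and combined with the easy direction this produces the claimed equality.

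The only delicate point is that $\bar S$ must be a legitimate closed zz-path, i.e., the left-right alternation has to hold across the two junctions at $q$ and $q'$. If the side of $S$ incident to one of these endpoints happens to have the same left/right type as $qq'$, one restores alternation by a small preliminary modification of $S$: either insert a short auxiliary segment of the opposite type at the offending endpoint and let its length tend to zero (using the strict inequality in Proposition \ref{pro:sides_short} and continuity to pass to the limit), or absorb a same-type initial side into $qq'$ via Lemma \ref{lem:distance_additive}, thereby reducing the number of sides and proceeding inductively. The main substance of the argument is already encoded in Proposition \ref{pro:sides_short}, whose proof rests on the Increment axiom through the strictly contracting property of the midpoint projection (Lemma \ref{lem:midpoint_project_estimate}); the corollary is then essentially a formal consequence, and the only obstacle is the alternation bookkeeping outlined above.
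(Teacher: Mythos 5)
Your proposal is correct and is essentially the paper's own proof: the paper likewise closes an arbitrary zz-path $S$ from $q$ to $q'$ into a loop and applies Proposition~\ref{pro:sides_short}, resolving the alternation issue precisely by your second device --- the (possibly degenerate) first and last sides of $S$ lying on the line of $qq'$ are stripped off, the intermediate subpath $S'$ is closed by the segment $\wt q\wt q'$ joining their far ends, and the resulting strict inequality $|\wt q\wt q'|<|S'|$ is converted into $|qq'|<|S|$ using additivity along the line (Lemma~\ref{lem:distance_additive}). Only your alternative fix of inserting a short side of the opposite type and letting its length tend to zero is not viable as stated, since a segment inserted at the junction cannot share a common end with both adjacent sides there without perturbing them, so the absorption/stripping argument is the one to use.
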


\begin{proof} Let
$S$
be a zz-path in
$\harm$
between
$q$, $q'$
different from the segment
$qq'$.
By definition, the first and last sides of
$S$
lie of the line determined by the segment
$qq'$.
We denote by
$\wt q$, $\wt q'$
the ends of the first and the last sides respectively,
and assume that we have the order
$q<\wt q<\wt q'<q'$
along the segment
$qq'$.
Any other order only makes arguments easier.

Let
$S'$
be the zz-subpath of
$S$
between
$\wt q$, $\wt q'$. 
Then
$S'$
together with the segment
$\wt q\wt q'$
gives a closed zz-path in
$\harm$.
By Proposition~\ref{pro:sides_short} we obtain
$|\wt q\wt q'|<|S'|$.
On the other hand,
$|qq'|=|\wt q\wt q'|+a$
and
$|S|=|S'|+a$,
where
$a=|q\wt q|+|\wt q'q'|$.
Hence
$|S|>|qq'|$
and thus
$\de(q,q')=|qq'|$. 
\end{proof}

This completes the proof of Theorem~\ref{thm:main}.


\begin{thebibliography}{ABC}

\bibitem[BeS17]{BeS17} J~Beyrer and V.~Schroeder, Trees and ultrametric M\"obius
structures, $p$-adic Numbers Ultrametric Anal. Appl., 9(4):247--256, 2017. 

\bibitem[BFI18]{BFI18} J.~Beyrer, E.~Fioravanti, M.~Incerti-Medici, Cross ratios
and isomorphisms of $\CAT(0)$ cube complexes, arXive:math. GT/1805.08478.

\bibitem[BoS]{BoS} M.~Bonk and O.~Schramm, Embeddings of Gromov hyperbolic spaces, 
Geom. Funct. Anal. 10 (2000),
no.2, 266--306.

\bibitem[Bu17]{Bu17} S.~Buyalo, M\"obius structures and timed causal spaces on the circle,
Algebra i analys, 29 (2017), n.5, arXiv:math. MG/1705.00478.

\bibitem[BS07]{BS07} S.~Buyalo, V.~Schroeder, Elements of asymptotic geometry,
EMS Monographs in Mathematics, 2007, 209 pages.

\bibitem[BS14]{BS14} S.~Buyalo, V.~Schroeder, M\"obius characterization
of the boundary at infinity of rank one symmetric spaces, Geometria Dedicata,  
Volume 172, Issue 1 (2014), Page 1-45; DOI:10.1007/s10711-013-9906-6. 

\bibitem[BS15]{BS15} S.~Buyalo, V.~Schroeder, Incidence axioms for the boundary 
at infinity of complex hyperbolic spaces, Anal.Geom.Metr. Spaces (2015) 3:244--277; 
DOI 10.1515/agms-2015-0015.

\bibitem[FS]{FS} T.~Foertsch, V.~Schroeder, Metric M\"obius geometry and 
a characterization of spheres,  Manuscripta Math. 140 (2013), no. 3-4, 613--620. 


 
\end{thebibliography}
\end{document}